\newtheorem{theorem}{Theorem}[section]
\newtheorem{prop}[theorem]{Proposition}
\newtheorem{fact}[theorem]{Fact}
\newtheorem{cor}[theorem]{Corollary}
\newtheorem{proposition}[theorem]{Proposition}
\newtheorem{lemma}[theorem]{Lemma}
\newtheorem{corollary}[theorem]{Corollary}
\newtheorem*{question*}{Question}
\theoremstyle{definition}
\newtheorem{definition}[theorem]{Definition}
\newtheorem{question+}[theorem]{Question}
\newtheorem{defn}[theorem]{Definition}
 \newtheorem{example}[theorem]{Example}
\theoremstyle{remark}
\newtheorem{rmk}[theorem]{Remark}
\newtheorem{remark}[theorem]{Remark}
 \DeclareMathOperator{\intr}{int}
  \DeclareMathOperator{\bd}{bd}
\newcommand{\WR}{\widetilde{\cal R}}
\newcommand{\la}{\langle}
\newcommand{\ra}{\rangle}
\newcommand{\sub}{\subseteq}
\newcommand{\cal}[1]{\ensuremath{\mathcal{#1}}}
\newcommand{\Rarr}{\ensuremath{\Rightarrow}}
\newcommand{\res}{\ensuremath{\upharpoonright}}
\newcommand{\cl}[1]{\begin{overline}{#1}\end{overline}}
\newcommand{\lam}{\ensuremath{\lambda}}
\newcommand{\es}{\ensuremath{\emptyset}}
\newcommand{\sm}{\setminus}
\newcommand{\Z}{\mathbb{Z}}
\newcommand{\N}{\mathbb{N}}
\newcommand{\Q}{\mathbb{Q}}
\newcommand{\R}{\mathbb{R}}
\title[Expansions with no new smooth functions]
{Expansions of real closed fields which introduce no new smooth functions}
\begin{document}

\author {Pantelis  E. Eleftheriou}

\author {Alex Savatovsky}

\address{Department of Mathematics and Statistics, University of Konstanz, Box 216, 78457 Konstanz, Germany}

\email{panteleimon.eleftheriou@uni-konstanz.de, alex.savatovsky@uni-konstanz.de}

\thanks{Research supported by an Independent Research Grant from the German Research Foundation (DFG), Zukunftskolleg Research Fellowship and a Young Scholar Fund.}

\subjclass[2010]{Primary 03C64,  Secondary 22B99}
\keywords{o-minimality, tame expansions, d-minimality, smooth functions}

\date{\today}
\begin{abstract} We prove the following theorem: let $\WR$ be an expansion of the real field $\overline \R$, such that every definable set (I) is a uniform countable union of semialgebraic sets, and (II)  contains a ``semialgebraic chunk".
Then every definable smooth function $f:X\sub \R^n\to \R$ with open semialgebraic domain is semialgebraic.

Conditions (I) and (II) hold for various d-minimal expansions $\WR = \la \overline \R, P\ra$ of the real field, such as when $P=2^\Z$, or $P\sub \R$ is an iteration sequence. 
A generalization of the theorem to d-minimal expansions $\WR$ of $\R_{an}$ fails. On the other hand, we prove our theorem for expansions $\WR$ of arbitrary real closed fields. Moreover, its conclusion holds for certain structures with d-minimal open core, such as $\la \overline \R, \R_{alg}, 2^\Z\ra$.



\end{abstract}
 \maketitle

\section{Introduction}

In this paper we provide conditions under which an expansion $\WR$ of the real field $\overline \R$ by a discrete set introduces no new smooth definable functions. This property can be viewed as a tameness condition for $\WR$, much alike the property of introducing no new open definable sets for expansions of $\overline \R$ by a dense-codense set (\cite{bh}). Note that neither of these properties is captured by the usual notions of tameness, since both $(\overline \R, exp)$ and $(\overline \R,\Z)$ define new smooth functions and open sets, yet they are at the two extremes of the model-theoretic complexity hierarchy, the first structure being o-minimal, the second defining the whole projective hierarchy.

The targeted example of the current work is the expansion $\WR=\la \overline \R, P\ra$ of the real field by $P=2^\Z$, first studied by L. van den Dries \cite{VDD1}.
It was proven there that every definable set is given by an existential formula, where the quantifier ranges over $P$ and the rest of the formula is semialgebraic. As a consequence, every definable subset of $\R$ is the union of an open set and finitely many discrete sets. Subsequently, C. Miller \cite{Mil1} extended this study to the context of \emph{d-minimal} structures, which are defined exactly as the expansions of $\la \R, <\ra$ that satisfy the last property, uniformly in parameters. By now, many structures are known to be d-minimal, such as expansions of the real field by an iteration sequence \cite{MT}, a fast sequence \cite{Mil2} or a spiral \cite{Mil1}.

In this paper, we take a different approach and set as our starting point a decomposition property for definable sets, which we introduce next. Although our main interest is in expansions of the real field, we work here in the general setting of expansions $\WR$ of an arbitrary real closed field $\cal R$.\\

\textbf{From the rest of this paper, we fix an expansion
$\WR=\la \cal R, \dots\ra$ of a real closed field $\cal R=\la R, <, +, \cdot\ra$. By `definable' (respectively, `semialgebraic'), we mean definable in $\WR$ (respectively, in \cal R), with parameters.}\\

The real closed field $\cal R$ is an example of an o-minimal structure. We assume familiarity with the basics of o-minimality, in particular with the o-minimal topology and the notions of a cell and cell decomposition. We call a semialgebraic set \emph{connected} if it is not the union of two proper relatively open semialgebraic subsets. Equivalently, any two points in it can be connected via a semialgebraic path.

For \emph{any} set $X\sub R^n$, we define its \emph{dimension} as the maximum $k$ such that some projection of $X$ to $k$ coordinates has non-empty interior.


\begin{defn}
Let $Y\sub X\sub R^n$ be two sets. We say that $Y$ is a \emph{semialgebraic chunk of $X$} if it is a semialgebraic cell, $\dim Y= \dim X$, and for every $y\in Y$, there is an open box $B\sub R^n$ containing $y$ such that $B\cap X\sub Y$. 
\end{defn}

For example, if $X= 2^\Z\cup\{0\}$, then every singleton contained in $X$  is a semialgebraic chunk, except for $\{0\}$.

\begin{defn} We say that $\WR$ has the \emph{decomposition property} (DP) if for every definable set $X\sub \R^n$,
\begin{enumerate}
  \item[(I)] there is a semialgebraic family $\{Y_t\}_{t\in R^m}$ of subsets of $R^n$, and a definable set $S\sub R^m$ with $\dim S=0$, such that $X=\bigcup_{t\in S} Y_t$,

  \item[(II)] $X$ contains a semialgebraic chunk.
\end{enumerate}
\end{defn}

\begin{defn}
We say that $\WR$ has the \emph{dimension property} (DIM) if for every definable family $\{X_t\}_{t\in S}$ of semialgebraic sets, with $\dim S=0$, we have
$$\dim \bigcup_{t\in S} X_t=max_{t\in S} \dim X_t.$$
\end{defn}

Our main theorem is the following (Theorem \ref{k4}).

\begin{theorem}\label{main}
Let $\WR$ be an expansion of a real closed field $\cal R$, satisfying \textup{(DP)} and \textup{(DIM)}. Let $f: X\sub R^n\to R$ be a definable smooth function, with an open semialgebraic domain $X$. Then $f$ is semialgebraic. 
\end{theorem}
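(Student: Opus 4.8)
My plan is to show that $f$ is semialgebraic by analyzing its graph $\Gamma\sub R^{n+1}$ via the decomposition property, and exploiting smoothness to upgrade a "large" semialgebraic piece of $\Gamma$ to all of $\Gamma$. First I would apply (DP)(I) to the definable set $\Gamma$ to write $\Gamma=\bigcup_{t\in S}Y_t$ with $\{Y_t\}$ a semialgebraic family and $\dim S=0$, and then apply (DP)(II) to obtain a semialgebraic chunk $Y\sub\Gamma$. Since $X$ is open in $R^n$ we have $\dim\Gamma=n$, so $\dim Y=n$; being a semialgebraic cell of dimension $n$ sitting inside a graph over $X$, $Y$ must (after possibly shrinking and permuting nothing, since the last coordinate is the graph coordinate) project to an open semialgebraic subset $U\sub X$, and on $U$ the set $Y$ coincides with the graph of $f\res U$. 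The chunk condition gives, for each $y=(u,f(u))\in Y$, an open box $B$ with $B\cap\Gamma\sub Y$; combined with $Y$ being a semialgebraic cell whose frontier has lower dimension, this forces $f\res U$ to agree with a semialgebraic function $g$ on the open semialgebraic set $U$.

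The heart of the argument is then a rigidity step: I want to propagate the equality $f=g$ from $U$ to all of $X$ using smoothness of $f$. The natural mechanism is a Taylor-expansion / analytic-continuation–style argument. Fix a connected semialgebraic component $X_0$ of $X$ meeting $U$. On $U$, $f$ agrees with the semialgebraic $g$, hence all partial derivatives of $f$ agree on $U$ with the (semialgebraic) partial derivatives of $g$. Now consider the definable set $Z=\{x\in X_0: \partial^\alpha f(x)=\partial^\alpha g(x)\text{ for all }\alpha\}$ — more precisely, since we cannot quantify over $\alpha$ inside the language, I would instead argue componentwise or use that $f$ and $g$ are both definable and smooth and look at the set where finitely many derivatives agree, iterating. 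The set where $f$ and $g$ and all their derivatives coincide is closed in $X_0$ (by continuity of the derivatives) and contains the open set $U\cap X_0$, so it is nonempty; the crucial claim is that it is also open. Openness should follow from a semialgebraic Taylor-with-remainder estimate: near a point $x_0$ where all derivatives of the smooth $f$ match those of the semialgebraic $g$, one shows $f-g$ vanishes to infinite order, and then — this is where d-minimality-type tameness, packaged via (DP) and (DIM), must enter — the definable function $f-g$ cannot vanish to infinite order on a definable set without vanishing on a neighborhood. Here I would again feed $f-g$, or rather its zero set and the sets $\{|f-g|\ge \varepsilon|x-x_0|^k\}$, through (DP) to get semialgebraic chunks and through (DIM) to control dimensions, deriving that the zero set of $f-g$ has interior at $x_0$. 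By connectedness of $X_0$, $Z=X_0$, so $f=g$ on $X_0$; ranging over components gives $f$ semialgebraic on all of $X$ (one must check the finitely-many-or-definably-many components of a semialgebraic open set are semialgebraic, which they are).

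The step I expect to be the main obstacle is precisely this openness/rigidity claim: showing that a definable smooth $h=f-g$ which agrees with $0$ to infinite order at a point must vanish on a neighborhood. For genuinely o-minimal $h$ this is immediate from the monotonicity theorem, but here $h$ is only definable in $\WR$, where such infinitely-flat definable functions can a priori exist (e.g. restrictions of $2^{-1/x^2}$-type behavior assembled from discrete data). The resolution must go through (DP): the graph of $h$, or the family of sublevel sets measuring its order of vanishing, decomposes as a dimension-zero union of semialgebraic pieces, each of which is a genuine semialgebraic function with a well-defined (finite) order of vanishing; (DIM) then prevents these finite orders from accumulating to force infinite flatness on a positive-dimensional set unless $h$ is actually zero there. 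Making this quantitative — i.e., extracting from the semialgebraic chunk of $\Gamma_h$ a semialgebraic function that both computes $h$ on an open set and is smooth, then comparing Taylor coefficients — is the technical core, and I would expect the authors to isolate it as a lemma (likely the content of an intermediate result preceding Theorem~\ref{k4}).
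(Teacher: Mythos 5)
Your opening move is fine and matches the paper's: decompose $\Gamma(f)$ via (DP)(I) into semialgebraic graphs $\Gamma(f_t)$ over pieces $Z_t\sub X$, and observe that the full-dimensional pieces give open sets on which $f$ is semialgebraic. The problem is the propagation step, which you correctly identify as the core but do not actually supply, and whose proposed mechanism would not work.

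Two concrete obstructions. First, your function $g$ is only defined on $U=\pi(Y)$; to compare Taylor coefficients of $f$ and $g$ at points of $X_0\sm U$ you need a definable extension of $g$ beyond $U$, and a semialgebraic function on $U$ has no canonical extension — producing one is essentially equivalent to the conclusion you are trying to prove. Second, and more seriously, infinite-order tangency does not determine a smooth function, and your proposed rescue (feeding zero sets and sublevel sets $\{|f-g|\ge\varepsilon|x-x_0|^k\}$ through (DP) and (DIM)) never uses that $g$ is \emph{algebraic} rather than merely tame. The identical argument, with ``semialgebraic'' replaced by ``definable in $\R_{an}$'', would then prove the analogous statement over $\R_{an}$ — which is false: by Example \ref{exa-Philipp}, $\la\R_{an},e^{2\pi\Z}\ra$ satisfies all the hypotheses in that setting yet defines $\sin\log x$, a new smooth function assembled from countably many $\R_{an}$-definable pieces that agree to infinite order at the seams. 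So any correct proof must exploit polynomiality in an essential way. The paper does this via real algebraic geometry: the Zariski closure of a connected Nash submanifold is irreducible (Fact \ref{Coste}), hence two pieces $f_t$, $f_{t'}$ whose domains can be concatenated into a connected open set have the same Zariski closure of their graphs (Corollary \ref{k3}); no Taylor coefficients are compared at all. The remaining combinatorial work — which your clopen argument gestures at but does not carry out — is to show that if not all pieces lie in a common family of algebraic sets, then two pieces from \emph{distinct} classes must be adjacent across a codimension-one wall and hence concatenable; this is Proposition \ref{a1}, proved using (DP)(II) applied to the union of the boundaries $\bd(W_t)$, together with the polynomial bookkeeping of Lemma \ref{decomp_pol} and the sets $B_t$. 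Without a replacement for Fact \ref{Coste} and for Proposition \ref{a1}, your outline does not close.
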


\begin{rmk}\label{rmk-thm} $ $
\begin{enumerate}
              \item If $R=\R$ and $\WR$ is d-minimal, then (DIM)  becomes redundant. Indeed, in this case, a definable set is $0$-dimensional if and only if it is discrete if and only if it is countable. Hence (DIM) holds by the Baire Category Theorem, see Fact \ref{dim} below.

              \item If $R=\R$ and (DP)(I) holds with $S$ countable instead of $\dim S=0$, then every definable set $X$ is countable if and only if it is $0$-dimensional.  Indeed, if $\dim X=0$, then by the adapted (DP)(I), it is a countable union of finite sets, and hence countable. If $\dim X>0$, then by (DP)(II), it must contain an infinite semialgebraic set, and hence it is uncountable. In particular, (DIM) in this case again becomesredundant, by Fact \ref{dim}, and we also obtain the ``countable version" of the theorem stated in the abstract (where by a ``uniform countable decomposition", we mean a decomposition as in (DP)(I) with $S$ countable.)




                 \item If we replace ``smooth" by ``analytic" in Theorem \ref{main}, then the conclusion becomes clear, since an analytic function which is semialgebraic on an open subset of its domain is semialgebraic. On the other hand, if $\WR$ is d-minimal and not o-minimal, then it is possible to define new $\cal C^n$ functions, for every $n\in \N$, see Remark \ref{rmk-optimal}(3) below.


            \end{enumerate}
\end{rmk}



In Section \ref{sec-examples}, we apply our theorem to various examples.

\begin{prop}\label{prop-examples}
Let $\WR$ be any of the following structures:
\begin{enumerate}
  \item   $\la \overline \R, \alpha^\Z\ra$, where $1<\alpha\in \R$, 
  \item   $\la \overline \R, P\ra$, where $P$ is an iteration sequence,
\end{enumerate}
Then $\WR$ satisfies \textup{(DP)}. Hence (by Remark \ref{rmk-thm}(1)), the conclusion of Theorem \ref{main} holds.
\end{prop}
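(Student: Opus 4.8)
The plan is to check conditions \textup{(I)} and \textup{(II)} of \textup{(DP)} for each of the two structures; condition \textup{(DIM)} is then automatic by Remark~\ref{rmk-thm}(1), since both are d-minimal expansions of $\R$, and Theorem~\ref{main} applies. For \textup{(I)} I would invoke the known structure theory. For $P=\alpha^{\Z}$ with $1<\alpha\in\R$, the argument of \cite{VDD1} for $\langle\overline\R,2^{\Z}\rangle$ applies with $\log_{\alpha}$ in place of $\log_{2}$: every definable $X\sub R^{n}$ is defined by a formula $\exists\bar t\in P^{k}\,\varphi(\bar x,\bar t)$ with $\varphi$ semialgebraic, and the same holds for an iteration sequence $P$ by \cite{MT}. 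Putting $Y_{\bar t}=\{\bar x:\varphi(\bar x,\bar t)\}$ and letting $S\sub P^{k}$ be the definable set of parameters that actually occur, we get $X=\bigcup_{\bar t\in S}Y_{\bar t}$ with $\{Y_{\bar t}\}$ semialgebraic; since $P$ is countable, $S$ is a countable definable set, so $\dim S=0$ by d-minimality (Fact~\ref{dim}). This is \textup{(I)} (in fact in the countable form of Remark~\ref{rmk-thm}(2)).

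The substance is \textup{(II)}. Fix a definable $X$, put $d=\dim X$, and start from $X=\bigcup_{\bar t\in S}Y_{\bar t}$ as above. Applying o-minimal uniform cell decomposition to the semialgebraic family $\{Y_{\bar t}\}$ over the parameter space $R^{k}$, I would rewrite $X$ as a finite union $\bigcup_{l=1}^{N}\bigcup_{\bar t\in T_{l}}C^{(l)}_{\bar t}$, where each $T_{l}\sub P^{k}$ is discrete (here we use that $P$, and hence $P^{k}$, is discrete in $\R^{k}$) and each $\{C^{(l)}_{\bar t}\}_{\bar t\in T_{l}}$ is a semialgebraic family of cells of one fixed type. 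Let $G$ be the set of $x\in X$ admitting an open box $B\ni x$ which meets $C^{(l)}_{\bar t}$ for only finitely many \emph{distinct} cells $C^{(l)}_{\bar t}$. The key claim is that $\dim G=d$: the families with $\dim C^{(l)}_{\bar t}<d$ contribute, over the $0$-dimensional $T_{l}$, only a set of dimension $<d$ by \textup{(DIM)}, so they are irrelevant here; and for a semialgebraic family of $d$-cells indexed by a discrete set, the set of points at which infinitely many distinct members accumulate is of dimension $<d$. Given $\dim G=d$, pick $x\in G$ at which $X$ has local dimension $d$ (possible, since the locus of local dimension $<d$ is contained in the union of the cells of dimension $<d$, hence of dimension $<d$); then $B\cap X$ is a finite union of semialgebraic cells, so semialgebraic of dimension $d$, and a standard o-minimal stratification argument inside $B$ produces a $d$-cell $Y$ that is open in $B\cap X$ — whence $Y$ is a semialgebraic chunk of $X$, giving \textup{(II)}.

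I expect the main obstacle to be the key claim $\dim G=d$, and within it the assertion about accumulation of a discrete family of $d$-cells. The tools I would use are the o-minimal dimension and finiteness theorems applied to the semialgebraic sets $\{(\bar t,\bar t'):C_{\bar t}=C_{\bar t'}\}$ and $\{(\bar t,x):x\in\overline{C_{\bar t}}\}$, together with the elementary fact that a bounded discrete subset of $\R$ is finite: these should force that if infinitely many distinct $C_{\bar t}$ come arbitrarily close to a fixed point then the corresponding parameters are unbounded, and that a semialgebraic family cannot have unboundedly-indexed members accumulating on a $d$-dimensional set. Making this precise, and in particular stripping away the lower-dimensional loci on which cells from the finitely many families genuinely pile up on one another — as happens, for instance, at the origin for the family of segments $\{(x,\beta x):0<x<1\}$, $\beta\in P$ — is the delicate point.
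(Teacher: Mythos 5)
Your treatment of (DP)(I) and of (DIM) matches the paper: (I) is imported from the known quantifier-elimination/term results (the paper's Fact \ref{DPI}, citing \cite{tycho} and \cite{MT}), and (DIM) is discharged by d-minimality plus Baire category exactly as in Remark \ref{rmk-thm}(1) and Fact \ref{dim}. The divergence, and the problem, is in (DP)(II). The paper does not argue locally from the uniform decomposition; it invokes Fornasiero's structure theorem for d-minimal structures (Fact \ref{fact-for}: every definable set is a finite union of embedded manifolds) and then, in Proposition \ref{prop-chunk}, finds a $k$-cell inside either $\bigcap_i X_i$ or some $X_i\sm\bigcup_{j\ne i}\cl{X_j}$ and verifies the chunk property via Lemma \ref{LchunkY}. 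Your proposal tries to substitute for this structural input a direct accumulation analysis of the cells $C^{(l)}_{\bar t}$, and that is where the argument breaks.

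Concretely, your key claim --- that for a semialgebraic family of $d$-cells indexed by a discrete set, the points at which infinitely many distinct members accumulate form a set of dimension $<d$ --- is false. In $\la \overline \R, 2^\Z\ra$, the pairwise disjoint $1$-cells $C_t=\{t\}\times(0,1)$, $t\in 2^{\Z}$, accumulate on all of $\{0\}\times(0,1)$, a set of dimension $1=d$. Worse, the conclusion you actually need, $\dim G=d$, can fail outright for a legitimate (DP)(I)-presentation: take $X=\R=\bigcup_{t\in 2^\Z}Y_t$ with $Y_t=\R\sm\{t\}$; uniform cell decomposition gives the cells $(-\infty,t)$ and $(t,\infty)$, and every open box meets infinitely many distinct such cells, so $G=\es$ even though $X=\R$ trivially contains a chunk. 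The decomposition produced by quantifier elimination comes with no disjointness or minimality, and there is no definable procedure in your sketch to normalize it; pruning the overlaps and the genuine pile-up loci is not merely ``delicate'' but is essentially equivalent to the embedded-manifold decomposition that the paper imports from \cite{For}. As written, then, the proof of (DP)(II) has a genuine gap; to close it you should either prove your key claim after first establishing a suitably rigid (e.g.\ pairwise disjoint, locally finite off a lower-dimensional set) decomposition, or do as the paper does and route the argument through Fact \ref{fact-for} and Lemma \ref{LchunkY}.
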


We suspect that (DP) holds in more d-minimal expansions of the real field, such as expansions by fast sequences (Question \ref{qn-fast}). Here we apply our theorem to structures that go even beyond the d-minimal setting, and where (DP)(II) actually fails (Corollary \ref{cor-khani}). 

 \begin{cor}
 Let $\WR=\la \overline \R, \R_{alg}, 2^\Z\ra$, where $\R_{alg}$ is the field of real algebraic numbers. Then every definable smooth function with open semialgebraic domain is semialgebraic.
 \end{cor}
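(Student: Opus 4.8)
The plan is to reduce the statement to Theorem \ref{main}, applied not to $\WR$ itself but to the d-minimal structure $\cal D := \la \overline \R, 2^\Z\ra$, by exploiting the fact that $\WR=\la \overline \R, \R_{alg}, 2^\Z\ra$ has d-minimal open core. Note that Theorem \ref{main} cannot be applied to $\WR$ directly: for instance $\R_{alg}$ is a $\WR$-definable set of dimension $1$ which contains no semialgebraic chunk, so $\WR$ fails \textup{(DP)(II)}. The key input, coming from the structure theory of dense pairs of real closed fields (here the pair $(\overline \R, \R_{alg})$ further enriched by the subset $2^\Z$ of the small field), is that \emph{every $\WR$-definable open subset of any $R^k$ is already definable in $\cal D$}; equivalently, the open core of $\WR$ is a reduct of $\cal D$.

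Granting this, the second step converts the smooth function into an open set. Let $X\sub R^n$ be the open semialgebraic domain of $f$ and set
$$U=\{(x,y)\in X\times R : y<f(x)\}.$$
Since $X$ is open and $f$ is continuous, $U$ is open in $R^{n+1}$; it is $\WR$-definable, hence by Step 1 it is $\cal D$-definable. Moreover, for $x\in X$, openness of $X$ together with continuity and real-valuedness of $f$ gives that $(x,y)\in\intr(U)$ iff $y<f(x)$, that $(x,y)$ lies in the interior of the complement of $U$ iff $y>f(x)$, and hence that $(x,y)\in\bd(U)$ iff $y=f(x)$. Therefore the graph of $f$ satisfies
$$\Gamma_f=\bd(U)\cap(X\times R).$$
The right-hand side is the intersection of a semialgebraic set with $\bd(U)=\overline U\setminus\intr(U)$, which is $\cal D$-definable because $U$ is; thus $\Gamma_f$, and so $f$, is $\cal D$-definable.

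To conclude, recall that $\cal D=\la \overline \R, 2^\Z\ra$ is d-minimal \cite{VDD1, Mil1}, that it satisfies \textup{(DP)} by Proposition \ref{prop-examples}(1) with $\alpha=2$, and therefore also \textup{(DIM)} by Remark \ref{rmk-thm}(1). Since $f$ is a $\cal D$-definable smooth function with open semialgebraic domain, Theorem \ref{main} applied to $\cal D$ gives that $f$ is semialgebraic.

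The main obstacle is Step 1, the identification of the open core of $\WR$: this is a statement purely about definable open sets in the pair $(\overline \R, \R_{alg})$ expanded by $2^\Z$, and has nothing to do with smoothness; it is the content of the structural results alluded to before the statement. Once it is available, the ``region under the graph'' trick and the appeal to Theorem \ref{main} are routine, the only mild point being to verify carefully that $U$ is open and that $\bd(U)\cap(X\times R)$ recovers exactly $\Gamma_f$ — both of which use in an essential way that the domain $X$ is open.
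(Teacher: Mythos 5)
Your proof is correct and follows essentially the same route as the paper: both reduce to Khani's open-core result (every $\WR$-definable open set is already definable in $\la \overline \R, 2^\Z\ra$) and then apply Theorem \ref{main} to the d-minimal structure $\la \overline \R, 2^\Z\ra$, which satisfies (DP) and (DIM). The only cosmetic difference is the encoding of the graph: the paper recovers $\Gamma(f)$ as $\overline{\Gamma(f)}\cap(X\times\R)$, whereas you recover it as $\bd(U)\cap(X\times\R)$ for the open subgraph $U$ — both are valid.
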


Finally, by Example \ref{exa-Philipp}, due to P. Hieronymi, if we replace $\cal R$ by an o-minimal expansion $\cal R'$ of a real closed field (such as $\R_{an}$), and  `semialgebraic' by `definable in $\cal R'$', then Theorem \ref{main} fails.

$ $\\\noindent\textbf{Strategy of the proof.}
In the rest of this introduction, we describe the strategy of our proof of Theorem \ref{main}, and provide a basic example. The proof uses a key fact from real algebraic geometry  (\cite[Proposition 8.4.1]{BCR} or Fact \ref{Coste} below), that the Zariski closure of a connected Nash-submanifold is irreducible. This allows us to establish that under certain conditions, the Zariski closures of certain graphs of two functions coincide (Corollary \ref{k3}).

\begin{example}
Let $\WR=\la \overline \R, 2^\Z\ra$ and $f: \R^{>0}\to \R$ be a definable smooth function which is semialgebraic on each $(2^n, 2^{n+1})$, $n\in \Z$. Denote $f_n= f_{\res (2^n, 2^{n+1})}$. By induction on $|n|$, and Lemma \ref{L-def}, it suffices to prove that the graphs $\Gamma(f_n)$ and $\Gamma(f_{n+1})$ are contained in the same irreducible algebraic set of dimension $1$ (namely, that their Zariski closures $\overline{\Gamma(f_n)}^{zar}$ and $\overline{\Gamma(f_{n+1})}^{zar}$ coincide). 
If $f_n$ and $f_{n+1}$ are given by polynomials, then it is not hard to show that the smoothness condition on $f$ forces (the degrees and coefficients of) those polynomials to be the same.
In general, the graphs $\Gamma(f_n)$ and $\Gamma(f_{n+1})$ are contained in some $1$-dimensional algebraic sets, and the smoothness condition on $f$ forces the ``concatenation'' of their graphs to be a connected Nash-submanifold (Definition \ref{def-Nashsub}). By Fact \ref{Coste}, its Zariski closure is irreducible. This implies
$\overline{\Gamma(f_n)}^{zar}=\overline{\Gamma(f_{n+1})}^{zar}$(Corollary \ref{k3}), as needed.
\end{example}

For arbitrary definable functions in $\la \overline \R, 2^\Z\ra$, the above inductive proof cannot work because, even though by (DP)(I) the graph of $f$ is a countable union of graphs of semialgebraic maps $f_t:Z_t\sub \R^n\to \R$, $t\in S$, it need not be true that every $f_t$ can be ``concatenated'' with some other $f_{t'}$ and yield a connected Nash-submanifold. For example, suppose that the above $f$ is extended to the whole $\R$, via some semialgebraic map $g:(-\infty, 0)\to \R$. Then there is no $n\in \Z$, such that the set $[-\infty, 0] \cup [2^n, 2^{n+1}]$ is connected.

We are thus led to prove Theorem \ref{main} by contradiction, as follows. Using (DP)(I) and Lemma \ref{decomp_pol}, we can find a $0$-dimensional definable family of algebraic sets  $\{Y_t : t\in S\}$ in $\R^n$ , such that  for every $t\in S$, the graph of $f_t$ is contained in $Y_t$. Using a basic dimension-theoretic argument, it suffices to prove that for any $t, t'\in S$ with $\dim Y_t=\dim Y_{t'}=n$, we have $\overline{\Gamma(f_t)}^{zar}=\overline{\Gamma(f_{t'})}^{zar}$. Assume not. We apply Proposition \ref{a1}, which is proved using (DP)(II), to find distinct $f_t$ and $f_{t'}$, which are Nash-concatenable. Thus, again by Corollary \ref{k3}, we obtain $\overline{\Gamma(f_t)}^{zar}=\overline{\Gamma(f_{t'})}^{zar}$, a contradiction.

$ $\\
\noindent\emph{Structure of the paper.}
In Section \ref{sec-prel}, we fix notation and collect some basic facts. In Section \ref{sec-proof}, we prove Theorem \ref{main}, after some preparatory work and an interlude of real algebraic geometry. In Section \ref{sec-examples}, we apply our theorem to certain examples and state some open questions.

$ $\\
\noindent\textbf{Acknowledgments.}
We thank Chris Miller and Patrick Speissegger for their important feedback, Claus Scheiderer and Markus Schweighofer for helpful discussions over real algebraic geometry, and Philipp Hieronymi and Erik Walsberg for pointing out further examples and asking relevant questions.




\section{Preliminaries}\label{sec-prel}



We assume familiarity with the basics of o-minimality, as they can be found for example in \cite{VDD}. 
By a $k$-cell, we mean a semialgebraic cell of dimension $k$.  If $S\sub R^n$ is a set, its closure, interior and boundary are denoted by $\cl S, \intr(S), \bd(S):=\cl S\sm \intr(S)$, respectively. Sole exception to this notation is that of the real field $\overline \R$.
We call a family $\{X_t\}_{t\in S}$ of sets $X_t\sub R^n$ semialgebraic (respectively, definable) if the set $\bigcup_{t\in S}\{t\}\times X_t$ is semialgebraic (respectively, definable). By an open box $B\sub R^n$, we mean a set of the form
$$B=(a_1, b_1)\times \dots \times (a_n\times b_n),$$
for some $a_i< b_i\in R\cup\{\pm\infty\}$. By an open set we always mean a non-empty open set.  Unless stated otherwise, $\pi: R^{n}\to R^{n-1}$ denotes the coordinate projection onto the first $n-1$ coordinates. By a `smooth' function on an open domain we mean a function of class $\cal C^\infty$; namely, it is $n$-differentiable for every $n\in \N$.

 We prove three basic lemmas that will be used in the sequel.

\begin{lemma}\label{hop}
Let $X\subseteq R^n$ be an open semialgebraic set and $Y\subseteq \overline X$ a definable set, such that $\dim(X\setminus Y)<n$. Then  $\cl{Y}=\cl{X}$.
\end{lemma}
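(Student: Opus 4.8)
The plan is to prove the two inclusions separately. One direction is immediate: since $Y\sub \overline X$ and $\overline X$ is closed, taking closures gives $\overline Y\sub \overline X$.

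For the reverse inclusion $\overline X\sub \overline Y$, since $\overline Y$ is closed it suffices to show $X\sub \overline Y$. So I would fix $x\in X$ and, using that $X$ is open, pick an open box $B\sub R^n$ with $x\in B\sub X$. The key claim is that $B\cap Y$ is dense in $B$; granting this, $x\in\overline{B\cap Y}\sub\overline Y$, and we are done. To prove the claim, suppose it fails: then some open box $B'\sub B$ is disjoint from $Y$, i.e.\ $B'\sub B\sm Y$. Since $B\sub X$, we have the set-theoretic identity $B\sm Y=B\cap(X\sm Y)$, so $B'\sub X\sm Y$. But $B'$ is an open box in $R^n$, hence has dimension $n$, forcing $\dim(X\sm Y)=n$ and contradicting the hypothesis $\dim(X\sm Y)<n$.

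The only external input is the elementary fact, immediate from the definition of dimension recalled just above, that a subset of $R^n$ of dimension less than $n$ has empty interior and therefore contains no open box. I do not anticipate a genuine obstacle; the one place to be careful is the identity $B\sm Y=B\cap(X\sm Y)$, which holds precisely because $B\sub X$, and the passage from ``$B\cap Y$ dense in $B$'' to ``$x\in\overline Y$'', which only uses $x\in B$ and monotonicity of closure.
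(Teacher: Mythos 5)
Your proof is correct and follows essentially the same route as the paper's: both reduce to finding an open box inside $X$ around the given point and observing that, since $\dim(X\setminus Y)<n$, no open sub-box can avoid $Y$. The only cosmetic differences are that you phrase the key step as density of $B\cap Y$ in $B$ and you spell out the (trivial) inclusion $\overline Y\subseteq\overline X$, which the paper leaves implicit.
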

\begin{proof}
Let $x\in \cl{X}$. Let $B\sub R^n$ be any open box containing $x$. Since $X$ is open, there is an open box $B'\sub B\cap X$. Since $\dim (X\sm Y)<n$, $B'\cap Y\ne \es$, and hence $B\cap Y\ne \es$. Thus $x\in \overline Y$, as needed.
\end{proof}
 \begin{lemma}\label{var} Assume \textup{(DP)(II)}. Let $Y\sub R^n$ be an open definable set. Then $\dim \bd(Y)< n$.
\end{lemma}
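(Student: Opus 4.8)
The plan is to reduce the claim to the elementary point-set fact that the boundary of an open set has empty interior. I would argue by contradiction: suppose $\dim\bd(Y)=n$. Since $Y$ is definable, so is its closure $\overline{Y}$, and since $Y$ is open we have $\bd(Y)=\overline{Y}\setminus\intr(Y)=\overline{Y}\setminus Y$, so $\bd(Y)$ is definable as well. With the notion of dimension fixed in this paper, a set $X\sub R^n$ has dimension $n$ exactly when $\intr(X)\ne\es$ (the projection of $X$ to all $n$ coordinates being $X$ itself). Hence from $\dim\bd(Y)=n$ I obtain an open box $B$ with $B\sub\intr(\bd(Y))\sub\bd(Y)=\overline{Y}\setminus Y$.

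Now the contradiction drops out of the two defining halves of $\bd(Y)=\overline{Y}\setminus Y$. Fix any point $z\in B$. On the one hand $z\in B\sub\bd(Y)\sub\overline{Y}$, so, $B$ being an open neighbourhood of $z$, $B$ must meet $Y$. On the other hand $B\sub\bd(Y)$ and $\bd(Y)\cap Y=\es$, so $B\cap Y=\es$. This is a contradiction, and therefore $\dim\bd(Y)<n$. If one prefers to use the stated hypothesis explicitly, one can instead apply \textup{(DP)(II)} to the definable set $\bd(Y)$ to obtain a semialgebraic chunk $Z\sub\bd(Y)$ with $\dim Z=\dim\bd(Y)=n$; being $n$-dimensional, such a $Z$ contains an open box $B\sub\bd(Y)$, and the same two-line argument applies.

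\emph{Main obstacle.} There is essentially no obstacle here: the whole content is the observation that a nonempty open subset of $\overline{Y}$ cannot be disjoint from $Y$, together with the bookkeeping that $\bd(Y)$ is definable and that $n$-dimensionality of a subset of $R^n$ coincides with having nonempty interior. The role of \textup{(DP)(II)} in the lemma is at most to hand over, in a structured (semialgebraic, cellular) form, the full-dimensional piece of $\bd(Y)$ whose existence is then contradicted; the conclusion is in fact forced directly by the paper's definition of dimension.
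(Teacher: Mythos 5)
Your proof is correct and is essentially the paper's own argument: extract an open box $B\sub\bd(Y)$ from the full-dimensionality of $\bd(Y)$, then note that $B\sub\overline{Y}$ forces $B\cap Y\ne\es$ while $B\sub\bd(Y)$ forces $B\cap Y=\es$. The paper obtains the box via (DP)(II), which you also cover as an alternative; your observation that the box already follows from the paper's definition of dimension (so that (DP)(II) is not strictly needed here) is accurate but does not change the substance of the argument.
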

\begin{proof}
Let us assume for contradiction that $\dim \bd(Y)=n$. By (DP)(II), $\bd(Y)$ contains an open box $B$. Since $B\subseteq \bd(Y)$, we have $B\cap Y=\emptyset$. But then $X\cap \overline Y=\es$, a contradiction.
\end{proof}

\begin{lemma}\label{l1}
Let $A_1,\,A_2 \subseteq R^n$ be two  open, connected and disjoint  semialgebraic sets. Suppose there is an open box $B\subseteq \intr(\cl{A_1}\cup \cl{A_2})$, such that $B\cap A_i\neq \emptyset$, for $i=1,2$. Then $\intr(\cl{A_1}\cup \cl{A_2})$ is connected.
\end{lemma}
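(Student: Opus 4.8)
The plan is to show that $\intr(\cl{A_1}\cup \cl{A_2})$, call it $U$, is connected by contradiction. Suppose $U = V_1 \sqcup V_2$ with $V_1, V_2$ nonempty, disjoint, relatively open (equivalently, open in $R^n$ since $U$ is open). First I would observe that each $A_i$ is connected and contained in $U$: indeed $A_i \subseteq \cl{A_i} \subseteq \cl{A_1}\cup \cl{A_2}$, and since $A_i$ is open it lies in the interior $U$. A connected set contained in $V_1 \sqcup V_2$ must lie entirely in one piece; so $A_1 \subseteq V_j$ for some $j$ and $A_2 \subseteq V_k$ for some $k$. If $j = k$, say both are in $V_1$, then $\cl{A_1} \cup \cl{A_2} \subseteq \cl{V_1}$, so $U \subseteq \cl{V_1}$; but then $V_2 \subseteq U \cap (R^n \setminus \cl{V_1})$ would be a nonempty open set disjoint from $\cl{V_1} \supseteq V_1$, while also $V_2 \subseteq \cl{A_1}\cup\cl{A_2} \subseteq \cl{V_1}$ — contradiction. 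Hence $A_1$ and $A_2$ lie in different pieces, say $A_1 \subseteq V_1$ and $A_2 \subseteq V_2$.

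Now I would bring in the box $B \subseteq U$ with $B \cap A_i \neq \emptyset$ for $i=1,2$. Since $B$ is an open box it is connected, and $B \subseteq U = V_1 \sqcup V_2$, so $B$ lies entirely in $V_1$ or entirely in $V_2$. But $B \cap A_1 \neq \emptyset$ forces $B \subseteq V_1$ (as $A_1 \subseteq V_1$ and $A_1$ is disjoint from $V_2$), while $B \cap A_2 \neq \emptyset$ forces $B \subseteq V_2$ likewise. Since $V_1 \cap V_2 = \emptyset$ and $B \neq \emptyset$, this is a contradiction. Therefore no such separation exists and $U$ is connected.

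The only point requiring a little care — and the place I'd expect to be the main obstacle — is the step ruling out $j = k$, i.e. showing that $A_1$ and $A_2$ cannot be in the same component $V_1$. The clean way to phrase it: $V_2$ is a nonempty open subset of $U$, hence of $\cl{A_1}\cup\cl{A_2}$, so $V_2$ meets $A_1$ or $A_2$ (any nonempty open set meeting $\cl{A_i}$ meets $A_i$). But $V_2$ is disjoint from $V_1 \supseteq A_1 \cup A_2$, a contradiction. This is soft point-set topology throughout; semialgebraicity is only used to invoke the equivalence of topological connectedness with path-connectedness for the sets $A_i$, which is not even strictly needed for the argument as organized above, so the proof goes through for arbitrary connected open $A_i$.
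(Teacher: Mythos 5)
Your proof is correct and rests on the same three observations as the paper's (each $A_i$ is open and contained in $\cl{A_1}\cup\cl{A_2}$, hence in $U$; every nonempty open subset of $U$ meets $A_1$ or $A_2$; and $B$ bridges the two), merely phrased as a separation argument rather than by exhibiting semialgebraic paths as the paper does. The only caveat concerns your closing remark: since $R$ is an arbitrary real closed field, ``connected'' here must be read as \emph{semialgebraically} connected (no disjoint decomposition into nonempty relatively open \emph{semialgebraic} subsets), so the separating sets $V_1, V_2$ must be taken semialgebraic — your argument goes through verbatim in that setting, but the claim that it works for arbitrary topologically connected open sets is only meaningful over $\R$.
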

\begin{proof}
Denote $X= \intr(\cl{A_1}\cup \cl{A_2})$, and take any $x, y\in X$. We need to find a semialgebraic path in  $X$ connecting $x$ and $y$. Take an open box  $B_1\sub X$  containing $x$. Clearly, $B_1$ must intersect $A_i$, for some $i=1, 2$, and since $B_1$ is connected, we may assume that $x\in A_i$. Similarly, we may assume $y\in A_i$, for some $i=1,2$. By assumption, there is an open box $B\sub X$ with each $B\cap A_i\ne\es$. Since all of  $B$, $A_1$ and $A_2$ are connected and contained in $X$, the result easily follows.
\end{proof}

\section{Proof of Theorem \ref{main}}\label{sec-proof}

The proof takes place in Subsection \ref{subsec-proof} below. As mentioned in the introduction, a crucial step of our strategy is to show that certain functions $f_i: X_i\to R$ are ``Nash-concatenable". This notion is introduced in Subsection \ref{sec-rag}. As a first step, one needs to establish that $\intr(\overline X_1 \cup \overline X_2)$ is connected. Towards that goal, we prove Proposition \ref{a1} below.

\subsection{Towards concatenation}

A simple instance of Proposition \ref{a1} is the following. Suppose that $\{W_t: t\in S\}$ is a definable family of disjoint open intervals in $R$, such that $R=\overline{\bigcup_{t\in S} W_t}$. Then there are $t\ne t'\in S$ such that $\intr(\cl{W_t}\cup \cl{W_{t'}})$ is connected.

\begin{proposition}\label{a1} Assume \textup{(DP)(II)}.
Let $\{W_t : t\in S\}$ be a definable family of open sets in $R^n$,  such that:
\begin{enumerate}
 \item[(i)] for every $t\in S$, $W_t=\intr (\cl{W_t})$,
  \item[(ii)] there is  an open cell $X\sub R^n$, such that $\overline X = \overline{\bigcup_{t\in S} W_t}$,
  \item[(iii)] for every $t, t'\in S$, $W_t\ne W_{t'}\Rarr\, W_t\cap W_{t'}=\es$,
\item[(iv)] there are $t, t'\in S$, $W_t\ne W_{t'}$.
\end{enumerate}
Then there are an open box $B\sub X$ and $W_{t_1}\ne W_{t_2}$, such that
\begin{enumerate}
\item  $B\cap W_{t_i}$ is semialgebraic, open and connected, $i=1,2$,

 \item  $\intr (\cl{B\cap W_{t_1}}\cup \cl{B\cap W_{t_2}})$ is connected.
\end{enumerate}
\end{proposition}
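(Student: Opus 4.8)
The idea is to locate, inside the open cell $X$, a point at which the family $\{W_t\}$ genuinely ``splits'' into at least two pieces, and then zoom into a small box around that point. First I would fix the distinct sets $W_{t}\ne W_{t'}$ provided by (iv). Since these are open and, by (iii), disjoint, their union has a boundary point lying in $X$: indeed, by (ii) the closures of the $W_t$ cover $\overline X$, so $X$ cannot be contained in $W_t$ alone (else $W_{t'}$, being open and disjoint from $W_t$, would have to avoid the dense-in-$X$ set $X$, forcing $W_{t'}\cap\overline X$ to have empty interior, contradicting (i)). Hence there is a point $x_0\in X$ in $\cl{W_t}\cap\cl{W_{t'}}$, or more carefully a point where two different members of the family accumulate. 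Actually the cleanest route is: let $U=\bigcup_{t}W_t$; by (ii), $U$ is dense in $X$ and $X$ is open, so $\dim(X\setminus U)<n$ would be automatic if $X\setminus U$ were definable, but here I instead argue that the ``ambiguity locus'' — the set of $x\in X$ such that every box around $x$ meets at least two distinct $W_t$ — is nonempty; this uses (iv) together with connectedness of $X$, since the set of $x$ with a box meeting exactly one $W_t$ and the alternative are both ``open-ish'' and cannot partition the connected set $X$ unless one is empty, which (iv) forbids.

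Once such an $x_0\in X$ is found, pick a box $B_0\sub X$ around $x_0$ meeting two distinct members $W_{s_1}, W_{s_2}$. Now apply (DP)(II) to a suitable definable set: the point is that $B_0\cap W_{s_1}$ need not be semialgebraic, connected, or equal to a box, so I cannot yet invoke Lemma \ref{l1}. Here is where (DP)(II) enters, exactly as flagged in the introduction. Consider the definable set $W_{s_1}$ itself (or $B_0\cap W_{s_1}$); by (DP)(II) it contains a semialgebraic chunk $Y_1$, which is a semialgebraic cell of dimension $n$, hence (being open, as $\dim Y_1 = \dim W_{s_1}=n$ and $Y_1$ is a cell with a box around each of its points inside $W_{s_1}$) an open semialgebraic set, and we may shrink to an open box $B_1\sub Y_1\sub W_{s_1}$. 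Similarly get an open box $B_2\sub W_{s_2}$. Now the obstacle is that $B_1$ and $B_2$ may be far apart; I need them to be ``adjacent'' in the sense required by Lemma \ref{l1}, i.e. there must be a box $B$ inside $\intr(\cl{B\cap W_{t_1}}\cup \cl{B\cap W_{t_2}})$ meeting both. To arrange this, I travel along a semialgebraic path in $X$ from a point of $B_1$ to a point of $B_2$ and look at the first time the path leaves the closure of $W_{s_1}$; near that crossing point $p\in\overline{W_{s_1}}$, using again that $p\in X=\intr(\overline{\bigcup W_t})$ and disjointness (iii), a small box $B\ni p$ is covered up to a small-dimensional set by finitely... — more precisely, $B\cap W_{s_1}$ is nonempty, and $B$ must also meet some $W_{t_2}\ne W_{s_1}$ (otherwise $B\sub\cl{W_{s_1}}$, contradicting $p$ being a genuine crossing point on the path out of $\cl{W_{s_1}}$). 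Apply (DP)(II) once more to $B\cap W_{s_1}$ and to $B\cap W_{t_2}$ to replace each by an open semialgebraic connected subset (shrinking $B$ if necessary so both intersections are semialgebraic open), and record these two sets as $B\cap W_{t_1}$, $B\cap W_{t_2}$ in the statement.

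Finally, with $B$ chosen so that each $B\cap W_{t_i}$ is semialgebraic, open, connected, disjoint (by (iii)), and $B$ meets both, Lemma \ref{l1} applies directly to $A_i := B\cap W_{t_i}$ — once we know $B\sub \intr(\cl{A_1}\cup\cl{A_2})$, which we can force by shrinking $B$ inside $\intr(\overline{W_{t_1}}\cup\overline{W_{t_2}})$, using (ii) and (i) to guarantee that $B$ minus $(W_{t_1}\cup W_{t_2})$ has empty interior so that $B\sub\intr(\cl{W_{t_1}}\cup\cl{W_{t_2}})$. This yields conclusion (2), and (1) holds by construction.

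\textbf{Main obstacle.} The delicate point is not Lemma \ref{l1} (which does the final topological work for free) but the extraction of the box $B$ that simultaneously meets two distinct $W_t$'s and sits inside the interior of the union of their closures: one must combine the covering hypothesis (ii), disjointness (iii), and self-regularity (i) to rule out that a small box around a ``crossing point'' is swallowed by a single $W_t$ or leaks into a low-dimensional junk set, and this is precisely the step where (DP)(II) is indispensable, since without a semialgebraic chunk the relevant intersections $B\cap W_t$ need not be semialgebraic, connected, or even full-dimensional.
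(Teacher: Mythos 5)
There is a genuine gap, and it sits exactly at the point your plan identifies as the main obstacle. Conclusion (1) requires that the \emph{actual} intersections $B\cap W_{t_i}$ be semialgebraic, open and connected. Applying (DP)(II) to $W_{s_1}$ (or to $B\cap W_{s_1}$) only produces a semialgebraic chunk \emph{inside} it; ``recording these two sets as $B\cap W_{t_1}$, $B\cap W_{t_2}$'' silently replaces the sets in the statement by proper subsets, and no amount of shrinking the box $B$ will make $B\cap W_t$ semialgebraic, since $W_t$ is merely definable. Moreover, even if you were content to work with the chunks $Y_1\sub B\cap W_{s_1}$ and $Y_2\sub B\cap W_{t_2}$, Lemma \ref{l1} needs a box contained in $\intr(\cl{Y_1}\cup\cl{Y_2})$ meeting both, and your argument that $B\sm(W_{t_1}\cup W_{t_2})$ has empty interior fails: other members $W_t$ of the family, which are open, may also meet $B$, so density of $\bigcup_t W_t$ in $X$ does not give $B\sub\intr(\cl{W_{t_1}}\cup\cl{W_{t_2}})$.

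The missing idea is to apply (DP)(II) not to the individual $W_t$'s but to the definable set $\bigcup_{t\in S}\bd(W_t)\cap X$. One first shows (this is a nontrivial claim, proved by induction on $n$ with a case split on whether $\pi$-fibres of $X$ are swallowed by single $W_t$'s) that this set has dimension exactly $n-1$; note that your ``ambiguity locus is nonempty'' argument only yields a nonempty boundary point, not this dimension count. A semialgebraic chunk $Y$ of that boundary set then satisfies $B\cap\bigcup_t\bd(W_t)\sub Y$ for a suitable box $B$, i.e.\ $Y$ captures \emph{all} boundary behaviour of the whole family inside $B$. Shrinking $B$ so that $B\sm Y$ has exactly two semialgebraic connected components $B_1,B_2$, each $B_i$ meets no $\bd(W_t)$ and hence lies entirely inside a single $W_{t_i}$, and one then checks $B\cap W_{t_i}=B_i$ exactly. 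This is what simultaneously delivers semialgebraicity and connectedness of $B\cap W_{t_i}$, rules out interference from third members of the family, and puts $B$ inside $\intr(\cl{B_1}\cup\cl{B_2})$ so that Lemma \ref{l1} applies.
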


\begin{proof}


By (iii) and since each $W_t$ is open, we have $\bigcup_{t\in S} \bd(W_t)\subseteq \bd(\bigcup_{t\in S} W_t)$. By Lemma  \ref{var}, $\dim \bigcup_{t\in S} \bd(W_t)\le n-1$.\\


\noindent\textbf{Claim.} \emph{$\dim  \bigcup_{t\in S} \bd(W_{t}) \cap X=n-1$.}
\begin{proof}[Proof of Claim] It suffices to find $t\in S$, such that $\dim \bd(W_{t}) \cap X=n-1$. By induction on $n$. For $n=1$, $X$ is an open interval and the statement follows easily from (iv).  
Let $n>0$. We split into two cases.\smallskip

\noindent\textbf{Case I:} For all $t\in S$, $\pi^{-1}(\pi(W_t))\cap X\subseteq W_t$. It is then not hard to see that for all $t\in S$ and $x\in \bd(\pi(W_t))$,
$$\pi^{-1}(x)\cap X\subseteq \bd(W_t)\cap X,$$
and   the family $\{\pi(W_t) :  t\in S\}$ satisfies Conditions (i)-(iv). By inductive hypothesis, there is
 $t\in S$, such that $\bd(\pi(W_t))\cap \pi(X)$ has dimension $n-2$. Thus, since $X$ is an open cell, the set $$\pi^{-1}(\bd(\pi(W_t))\cap \pi(X))\cap X\subseteq \bd(W_t)\cap X$$ has dimension $n-1$.\smallskip

\noindent\textbf{Case II:}  Otherwise, there are $t_1, t_2\in S$ with $W_{t_1}\ne W_{t_2}$ and $x_i\in W_{t_i}$, such that $\pi(x_1)=\pi(x_2)$. Since each $W_{t_i}$ is open, the set $A\subseteq \pi(W_{t_1})\cap \pi(W_{t_2})\sub R^{n-1}$ is open. It suffices to show that for all $x\in A$, there is $y\in \bd(W_{t_1})\cap X$, such that $\pi(y)=x$. Since $X$ is an open cell, $\pi^{-1}(x)\cap X$ is connected.  The sets $W_{t_i}$ being disjoint by (iii), and each $\pi^{-1}(x)\cap W_{t_i}\neq \emptyset$ open, we obtain the result.
\end{proof}

By the claim and (DP)(II), there is  a semialgebraic set $Y\sub \bigcup_{t\in S}\bd(W_{t})\cap X$ of dimension $n -1$, and an open box $B\sub R^n$, such that
$$\emptyset\neq B\cap \bigcup_{t\in S}\bd(W_{t})\cap X\subseteq Y.$$
Since $X$ is open, we may assume $B\sub X$. Moreover, since $Y$ is semialgebraic and has dimension $n-1$, by taking $B$ small enough, we may assume that  $B\setminus Y$ has two open semialgebraic connected components, $B_1$ and $B_2$. By (ii), Lemma \ref{var}, and since each $B_i\sub X$, we can find $t_1, t_2\in S$, such that $B_i\cap W_{t_i}\ne \es$, $i=1,2$.

We next show that $W_{t_1}\ne W_{t_2}$.
First observe that since $B\cap \bigcup_{t\in S}\bd(W_t)\subseteq Y$ and $B_i\cap Y=\emptyset$, we have  $B_i\cap \bigcup_{t\in S} \bd(W_t)=\emptyset$. In particular, $B_i\cap \bd(W_{t_i})=\emptyset$. Therefore, $B_i\subseteq W_{t_i}$, $i=1,2$. Since $B$ is an open box and $\dim (B\sm (B_1\cup B_2))<n$, we have
$$(*)\,\,\,\,\,B\sub \intr( \cl{B_1}\cup \cl{B_2})\sub \intr(\cl{W_{t_1}}\cup \cl{W_{t_2}}).$$
If $W_{t_1}=W_{t_2}$, then by (i), we obtain
$$B\subseteq \intr(\cl{B_1}\cup \cl{B_2})=\intr(\cl{W_{t_1}}\cup \cl{W_{t_2}})= \intr(\cl{W_{t_1}})=W_{t_1},$$
contradicting $B\cap \bigcup_{t\in S}\bd(W_t)\neq \emptyset$, (iii) \&(iv).

Now we are ready to show (1). It suffices to prove that $B\cap W_{t_i}=B_i$, for $i=1,2$. As shown earlier, we have $B_i\subseteq W_{t_i}\cap B$. On the other hand, observe that
$$B\cap W_{t_1}=(B_1 \cap W_{t_1}) \cup (B_2\cap W_{t_1}) \cup (B\cap Y\cap W_{t_1}).$$
Since $B_2\sub W_{t_2}$ and $W_{t_1}\cap W_{t_2}=\es$, the second part of the above union is empty. Moreover, since $Y\sub \bigcup_{t\in S} \bd(W_t)$, we have $Y\cap W_{t_1}=\es$. Therefore, the third part of the above union is also empty. Hence $B\cap W_{t_1}=B_1$. Similarly, one shows $B\cap W_{t_2}=B_2$, as needed.

Finally, (2) follows from (1), $(*)$ and Lemma \ref{l1}, for $A_i=B\cap W_{t_i}$.
\end{proof}

\subsection{Real algebraic geometry}\label{sec-rag}

In this section we recall  some basic facts from real algebraic geometry. A standard reference is Bochnak-Coste-Roy \cite{BCR}. The key fact is Fact \ref{Coste} below which states that the  Zariski closure of a connected Nash-submanifold is irreducible. This allows us to establish that under certain conditions, the Zariski closures of the graphs of two Nash-concatenable functions coincide (Corollary \ref{k3}).

\begin{definition}\label{def-nash}
By an \emph{algebraic set $A\sub R^n$}, we mean the zero set $P^{-1}(0)$ of a polynomial $P\in R[X]$.  The \emph{Zariski closure} of a set $V\sub R^n$ is the intersection of every algebraic set containing $V$,  denoted by  $\overline V^{zar}$. Note that $\overline V^{zar}$ is algebraic, because $R[X_1,\ldots, X_n]$ is Noetherian.

 Let $V$ be an algebraic set. We say that $V$ is \emph{irreducible} if, whenever $V=V_1\cup V_2$, with each $V_i$ algebraic, we have $V=V_i$, for $i=1$ or $2$. The \emph{Krull-dimension} of $V$, denoted by $\dim_K(V)$,  is the maximum $n$ such that there is a sequence of irreducible algebraic sets $V_0\subsetneq \ldots \subsetneq V_n\subseteq V$.
\end{definition}

\begin{fact}\label{Kdim}
Let $X$ be a semialgebraic set. Then
$$\dim(X)=\dim_K(X)=\dim({\overline X}^{zar}).$$
\end{fact}
\begin{proof}
For the first equality: by \cite[Proposition 2.8.4]{BCR}, if $A\sub R^n$ is an open semialgebraic set, then $\dim_K(A)=n$. By \cite[Proposition 2.8.7]{BCR}, the graph of a semialgebraic function  has Krull-dimension equal to the Krull-dimension of its domain. By \cite[Proposition 2.8.5]{BCR}, the Krull-dimension of a finite union of semialgebraic sets equals the sum of their Krull-dimensions. Now, let $X$ be a semialgebraic set. By cell decomposition, $X$ is a finite union of graphs of semialgebraic functions with open domains, and $\dim X$ equals the maximum of their dimensions. Combining the above three propositions, the first equality follows.

The second equality is by \cite[Proposition 2.8.2]{BCR}.
\end{proof}

\begin{lemma}\label{I1}
Let $U\subseteq V\subseteq R^n$ be two algebraic sets of the same dimension. If $V$ is irreducible, then $U=V$.
\end{lemma}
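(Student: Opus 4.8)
The plan is to use the characterization of Krull-dimension via chains of irreducible algebraic sets, together with Fact \ref{Kdim} to translate between the various notions of dimension. Suppose for contradiction that $U\subsetneq V$. Since $U$ is algebraic and $U\subseteq V$, and $\dim U=\dim_K U=\dim_K V=\dim V=:d$, there is a chain of irreducible algebraic sets $U_0\subsetneq U_1\subsetneq\dots\subsetneq U_d\subseteq U$ witnessing $\dim_K U=d$. I would like to extend this chain by one more step inside $V$, producing $U_0\subsetneq\dots\subsetneq U_d\subsetneq V$, which would force $\dim_K V\ge d+1$, a contradiction.

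The key point needed for that last step is that $U_d$ is a \emph{proper} subset of $V$. If $U_d=V$, then $V=U_d\subseteq U\subseteq V$, so $U=V$, contrary to assumption. Hence $U_d\subsetneq V$, and since $V$ is irreducible and $U_d$ is algebraic, the chain $U_0\subsetneq\dots\subsetneq U_d\subsetneq V$ is a strictly increasing chain of irreducible algebraic sets of length $d+1$ contained in $V$ (here $V$ itself is irreducible by hypothesis, and each $U_i$ is irreducible by choice). This gives $\dim_K V\ge d+1>d=\dim_K V$, the desired contradiction. Therefore $U=V$.

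Alternatively, and perhaps more cleanly, one can argue directly: write $V$ as the union of its irreducible components (which exist and are finitely many since $R[X]$ is Noetherian); since $V$ is irreducible, it has a single irreducible component, namely $V$ itself. Now $\overline{U}^{zar}=U$ since $U$ is already algebraic, and $U\subseteq V$ gives $U\subseteq V$ trivially; the content is that an algebraic subset $U$ of an irreducible $V$ with $\dim U=\dim V$ must equal $V$. This is a standard fact in algebraic geometry, and I would cite it from \cite{BCR} if a precise reference is available; but the self-contained chain argument above, using only Definition \ref{def-nash} and Fact \ref{Kdim}, is short enough to include in full.

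The main obstacle is simply making sure the dimension bookkeeping is airtight: one must be careful that $\dim_K$ is indeed computed by maximal chains \emph{of irreducible algebraic sets contained in the given set} (as in Definition \ref{def-nash}), and that appending $V$ to a maximal chain inside $U$ is legitimate precisely because $U_d\ne V$. No real geometric input beyond irreducibility and Noetherianity is required, so I expect this to be a short proof with no genuine difficulty.
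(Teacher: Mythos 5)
Your proof is correct and takes essentially the same route as the paper: the paper's proof just observes that $\dim_K(U)=\dim_K(V)$ by Fact \ref{Kdim} and says the conclusion ``follows from the definition of Krull-dimension,'' and your chain-extension argument (appending the irreducible $V$ to a maximal chain of irreducibles inside $U$ after checking $U_d\subsetneq V$) is precisely the detail the paper leaves implicit.
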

\begin{proof}
By Fact \ref{Kdim}, we have $\dim_K(U)=\dim_K(V)$, and hence the result follows from the definition of Krull-dimension.
\end{proof}

\begin{defn}(\cite[Definitions 2.9.3, 2.9.9]{BCR})\label{def-Nashsub}
A \emph{Nash function} $f: X\sub R^n\to R^m$ is a semialgebraic smooth function with open domain. A \emph{Nash-diffeomorphism}  $f:X\to Y$ is a Nash function which is a bijection and whose inverse is also Nash.

A semialgebraic set $V\subseteq R^m$ is a \emph{Nash-submanifold of dimension $d$} if,  for every $x\in V$, there is a Nash-diffeomorphism $\phi$ from an open semialgebraic neighborhood $U$ of the origin in $R^m$ onto an open semialgebraic neighborhood  $U'$ of $x$ in $R^m$, such that $\phi(0)=x$ and $\phi((R^d\times \{0\})\cap U)=V\cap U'$
\end{defn}

Note  that the graph of a Nash function with connected domain is a connected Nash-submanifold.

We introduce the following notion.

\begin{defn}
Let $f:X\sub R^n\rightarrow R^m$, $g:Y\sub R^n\rightarrow R^m$ be two semialgebraic maps with open connected domains. We say that \emph{$f$ and $g$ are Nash-concatenable}, if the set $A=\intr(\overline X\cup \overline Y)$ is connected, and there is a (unique) Nash function $h: A\to R^m$ that extends both $f$ and $g$. We denote $h=f^\smallfrown g$.
\end{defn}

The uniqueness of the above map $h$ follows from its continuity and the fact that $X\cup Y$ is dense in its domain $A$. Note also that since $A$ is connected, $\Gamma(h)$ is a connected Nash-submanifold.

The key fact from \cite{BCR} is the following.

\begin{fact}\label{Coste}
Let $V\subseteq R^m$ be a connected  Nash-submanifold. Then $\overline V^{zar}$ is irreducible.
\end{fact}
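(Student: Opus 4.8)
The statement is exactly \cite[Proposition 8.4.1]{BCR}, but here is how I would argue it from scratch. The plan is to pass to the equivalent algebraic formulation: for an algebraic set $W\sub R^n$, $W$ is irreducible if and only if the ideal $I(W):=\{P\in R[X_1,\dots,X_n]: P|_W\equiv 0\}$ is prime (one direction is the usual decomposition argument, the other uses that $W$ is covered by the two algebraic sets $W\cap P^{-1}(0)$ and $W\cap Q^{-1}(0)$ when $PQ\in I(W)$). Since $\overline V^{zar}$ is the smallest algebraic set containing $V$, we have $I(\overline V^{zar})=I(V)$, so it suffices to prove that $I(V)$ is prime, i.e. that whenever $P,Q\in R[X]$ satisfy $PQ|_V\equiv 0$, then $P|_V\equiv 0$ or $Q|_V\equiv 0$. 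Two standard inputs will be used: (a) a Nash-submanifold of dimension $d$ is a pure $d$-dimensional semialgebraic $\mathcal C^\infty$-manifold, so $\dim V=d$ and every semialgebraic subset of $V$ with empty interior in $V$ has dimension $<d$; and (b) Nash functions are real-analytic, so together with the identity theorem for real-analytic functions on a connected open subset of $R^d$, a Nash function on a connected open semialgebraic set that vanishes on a nonempty open subset vanishes identically.

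The heart of the proof is the following. Fix $P,Q$ with $PQ|_V\equiv 0$ and set $A=\{x\in V: P \text{ vanishes on some neighborhood of } x \text{ in } V\}$. Clearly $A$ is open in $V$, and I would show $A$ is also closed in $V$. Given $x\in\overline A\cap V$, use the definition of Nash-submanifold to get a Nash-diffeomorphism identifying a neighborhood of $x$ in $V$ with a connected open semialgebraic subset $\Omega\sub R^d$; the pullback of $P$ is a Nash (hence real-analytic) function on $\Omega$ which, because $x\in\overline A$, vanishes on a nonempty open subset of $\Omega$, hence vanishes identically on $\Omega$ by input (b). Thus a whole neighborhood of $x$ in $V$ lies in $A$, so $x\in A$. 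Since $V$ is connected, $A=\es$ or $A=V$; and if $A=V$ then $P|_V\equiv 0$ and we are done.

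It remains to handle the case $A=\es$. Defining $B$ analogously for $Q$ and repeating the argument, we may assume $B=\es$ as well. Then the semialgebraic set $\{x\in V:P(x)=0\}$ has empty interior in $V$ (an interior point would lie in $A$), hence by input (a) has dimension $<d$; likewise $\{x\in V:Q(x)=0\}$ has dimension $<d$. But $PQ|_V\equiv 0$ forces $V=\{P=0\}\cup\{Q=0\}$ (intersected with $V$), so $\dim V\le\max$ of the two, contradicting $\dim V=d$. Hence in fact $A=V$ or $B=V$, which proves that $I(V)$ is prime and therefore $\overline V^{zar}$ is irreducible.

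I expect the one genuinely delicate step to be the clopen-ness of $A$: this is the only place where the hypothesis that $V$ is a \emph{Nash-submanifold} (and not merely a connected semialgebraic set) is used, through the local model $R^d\times\{0\}$ and the passage from the algebraic/semialgebraic world to the analytic identity theorem. The rest is bookkeeping with o-minimal dimension and the dictionary between irreducibility of an algebraic set and primality of its vanishing ideal.
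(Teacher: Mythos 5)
The paper gives no proof here beyond the citation to \cite[Proposition 8.4.1]{BCR}, so there is nothing to compare line by line; but your argument is, in substance, the textbook proof of that proposition, unrolled. BCR also reduces irreducibility of $\overline V^{zar}$ to primality of $I(V)$ and derives that from the identity principle for Nash functions on a connected Nash manifold (there packaged as: the ring $\mathcal{N}(V)$ of Nash functions on connected $V$ is an integral domain). Your concrete version --- the clopen set $A$ of points of $V$ near which $P$ vanishes, connectedness of $V$, and, in the residual case $A=B=\es$, the dimension count on $V=(V\cap\{P=0\})\cup(V\cap\{Q=0\})$ --- is sound, as are the preliminary reductions $I(\overline V^{zar})=I(V)$ and ``$W$ irreducible $\Leftrightarrow$ $I(W)$ prime''. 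Input (a) (a semialgebraic subset of a $d$-dimensional Nash submanifold with empty interior in it has dimension $<d$) is standard, via the local-dimension lemma and a chart.

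The one step you should repair is input (b). The paper's standing convention is that $R$ is an \emph{arbitrary} real closed field, so ``Nash functions are real-analytic'' and the classical identity theorem for analytic functions are not available; they are genuinely specific to $R=\R$. The statement you actually need --- a Nash function on a connected open semialgebraic subset of $R^d$ that vanishes on a nonempty open subset vanishes identically --- does hold over every real closed field, but it is itself one of the nontrivial results of \cite[Chapter 8]{BCR} (equivalently: the zero set of a nonzero Nash function on a connected open semialgebraic set has empty interior), and it is precisely the input on which BCR's own proof of Proposition 8.4.1 rests. Citing that in place of analyticity makes your proof valid in the paper's generality; as written, it establishes the fact only for $R=\R$.
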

\begin{proof}
See \cite[Proposition 8.4.1]{BCR}.
\end{proof}

\begin{corollary}\label{k3}
Let $f$ and $g$ be two Nash-concatenable functions. Then $$\cl{\Gamma(f)}^{zar}=\cl{\Gamma(g)}^{zar}.$$
\end{corollary}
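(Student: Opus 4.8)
The plan is to exploit Fact \ref{Coste} together with Lemma \ref{I1}, using the common Nash extension $h=f^\smallfrown g$ as the bridge between the two graphs. First I would set $A=\intr(\overline X\cup \overline Y)$ and recall that, by definition of Nash-concatenability, $A$ is connected and $h\colon A\to R^m$ is a Nash function extending both $f$ and $g$; since $A$ is connected, $\Gamma(h)$ is a connected Nash-submanifold (as noted right after Definition \ref{def-Nashsub}). Applying Fact \ref{Coste} to $V=\Gamma(h)$, the Zariski closure $\overline{\Gamma(h)}^{zar}$ is irreducible.

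Next I would compare dimensions. Since $\Gamma(f)\subseteq \Gamma(h)$, we have $\overline{\Gamma(f)}^{zar}\subseteq \overline{\Gamma(h)}^{zar}$, and both are algebraic sets. By Fact \ref{Kdim}, $\dim\overline{\Gamma(f)}^{zar}=\dim\Gamma(f)=\dim X$ (the graph of a map has the dimension of its domain) and likewise $\dim\overline{\Gamma(h)}^{zar}=\dim A$. But $X$ is open in $R^n$, hence $n$-dimensional, and $A$, being the interior of a subset of $R^n$, is also $n$-dimensional; so $\dim\overline{\Gamma(f)}^{zar}=\dim\overline{\Gamma(h)}^{zar}=n$. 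Now Lemma \ref{I1}, applied to the inclusion $\overline{\Gamma(f)}^{zar}\subseteq\overline{\Gamma(h)}^{zar}$ of algebraic sets of the same dimension with the larger one irreducible, gives $\overline{\Gamma(f)}^{zar}=\overline{\Gamma(h)}^{zar}$. By the symmetric argument with $g$ in place of $f$, also $\overline{\Gamma(g)}^{zar}=\overline{\Gamma(h)}^{zar}$, and chaining the two equalities yields $\overline{\Gamma(f)}^{zar}=\overline{\Gamma(g)}^{zar}$, as desired.

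I do not expect a serious obstacle here; the only point requiring a line of care is the dimension bookkeeping — making sure that "graph of a semialgebraic map has the same dimension as its domain'' is invoked correctly (it is exactly \cite[Proposition 2.8.7]{BCR}, already used in the proof of Fact \ref{Kdim}), and that $\dim X=\dim A=n$ because both sets have nonempty interior in $R^n$. Everything else is a direct quotation of Fact \ref{Coste}, Fact \ref{Kdim}, and Lemma \ref{I1}.
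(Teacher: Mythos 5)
Your argument is correct and coincides with the paper's own proof: both apply Fact \ref{Coste} to the connected Nash-submanifold $\Gamma(f^\smallfrown g)$, check that $\dim\cl{\Gamma(f)}^{zar}=\dim\cl{\Gamma(f^\smallfrown g)}^{zar}=\dim\cl{\Gamma(g)}^{zar}$ via Fact \ref{Kdim}, and conclude with Lemma \ref{I1}. Your extra care with the dimension bookkeeping (both domains being open, hence $n$-dimensional) is a harmless elaboration of the same step.
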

\begin{proof}
Since $\Gamma (f\smallfrown g)$ is a connected Nash-submanifold, by Fact \ref{Coste}, $\cl{\Gamma (f\smallfrown g)}^{zar}$ is irreducible. Note that $\dim \Gamma(f)=\dim \Gamma(f\smallfrown g)=\dim \Gamma(g)$, and hence, by Lemma \ref{Kdim},
$$\dim \cl{\Gamma(f)}^{zar}=\dim \cl{\Gamma(f\smallfrown g)}^{zar}=\dim \cl{\Gamma(g)}^{zar} .$$
By Lemma \ref{I1}, $\cl{\Gamma(f)}^{zar}=\cl{\Gamma(f\smallfrown g)}^{zar}=\cl{\Gamma(g)}^{zar}$.
\end{proof}

\begin{cor}\label{Zaropen} Let $f:X\sub R^n\to R$ be a Nash function with connected domain, and let $U\sub X$ be open. Then $$\cl{\Gamma({f})}^{zar}=\cl{\Gamma(f_{\res U})}^{zar}.$$
\end{cor}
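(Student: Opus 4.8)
The plan is to apply the irreducibility of the Zariski closure of a connected Nash-submanifold together with the dimension equality of Fact \ref{Kdim}. First I would note that, since $f$ is a Nash function with connected domain $X$, its graph $\Gamma(f)$ is a connected Nash-submanifold, so by Fact \ref{Coste} the algebraic set $\overline{\Gamma(f)}^{zar}$ is irreducible. Next, because $X\sub R^n$ is open (and nonempty by our convention) we have $\dim X=n$, hence $\dim \Gamma(f)=n$; likewise $U\sub R^n$ is a nonempty open set, so $\dim U=n$ and $\dim \Gamma(f_{\res U})=n$. By Fact \ref{Kdim}, both $\overline{\Gamma(f)}^{zar}$ and $\overline{\Gamma(f_{\res U})}^{zar}$ are algebraic sets of dimension $n$.

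Then I would observe the trivial inclusion $\Gamma(f_{\res U})\sub \Gamma(f)$, which on passing to Zariski closures gives $\overline{\Gamma(f_{\res U})}^{zar}\sub \overline{\Gamma(f)}^{zar}$. Now I have two algebraic sets of the same dimension $n$, one contained in the other, with the larger one irreducible; Lemma \ref{I1} then yields $\overline{\Gamma(f_{\res U})}^{zar}=\overline{\Gamma(f)}^{zar}$, as desired. Note that no connectedness of $U$ is needed: only $\Gamma(f)$ (not $\Gamma(f_{\res U})$) must be a connected Nash-submanifold. There is no real obstacle here; the only point to be careful about is recording the dimension computation $\dim\Gamma(f)=\dim\Gamma(f_{\res U})=n$, which is immediate from the domains being open subsets of $R^n$.
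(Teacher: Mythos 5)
Your argument is correct, but it takes a different route from the paper's, which proves this corollary in one line by invoking Corollary \ref{k3} with $g=f_{\res U}$, i.e., by treating $f$ and $f_{\res U}$ as Nash-concatenable with concatenation $f$ itself. You instead apply the same three underlying ingredients --- Fact \ref{Coste}, Fact \ref{Kdim} and Lemma \ref{I1} --- directly to the inclusion $\Gamma(f_{\res U})\sub\Gamma(f)$, bypassing concatenation altogether. This buys you two things. First, as you note, $U$ need not be connected, whereas Nash-concatenability is defined only for maps with open \emph{connected} domains, so the paper's route implicitly reduces to a connected component of $U$. Second, Nash-concatenability of $f$ and $f_{\res U}$ formally asks for a Nash extension of $f$ to $\intr(\overline{X})$, which for a general open connected $X$ may be strictly larger than $X$; your argument sidesteps this verification since you only ever apply Fact \ref{Coste} to $\Gamma(f)$ itself. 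The one small point you pass over is that Fact \ref{Kdim} is stated for semialgebraic sets, so computing $\dim\overline{\Gamma(f_{\res U})}^{zar}=n$ implicitly uses that $U$ (hence $\Gamma(f_{\res U})$) is semialgebraic, as it always is in the paper's applications; for an arbitrary open $U$ one would first pass to an open box $B\sub U$ and use $\overline{\Gamma(f_{\res B})}^{zar}\sub\overline{\Gamma(f_{\res U})}^{zar}\sub\overline{\Gamma(f)}^{zar}$ to pin down the dimension. With that caveat recorded, the proof is complete.
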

\begin{proof}
By Corollary \ref{k3}, for $g=f_{\res U}$.
\end{proof}

\subsection{Proof of Theorem \ref{main}}\label{subsec-proof}

We will need two preliminary lemmas. If \cal C is a collection of subsets in $R^{n+1}$, by $\pi(\cal C)$ we denote the collection of their projections on $R^n$.

\begin{lemma}\label{L-def}
Let $f:X\subseteq R^n\rightarrow R$ be a continuous map, such that $X$ is a
$k$-cell and $\Gamma(f)\subseteq Y$, with $Y$ a semialgebraic set of
dimension $k$. Then $f$ is semialgebraic.
\end{lemma}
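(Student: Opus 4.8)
\textbf{Proof proposal for Lemma \ref{L-def}.}

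The plan is to reduce to the known fact that a continuous semialgebraic-on-a-set map whose graph lies in a low-dimensional semialgebraic set must itself be semialgebraic, by exploiting the cell structure of $X$ and a dimension count. First I would replace $Y$ by $\cl{\Gamma(f)}^{zar}$, or rather work with $Y':=Y\cap \pi^{-1}(X)$, where $\pi:R^{n+1}\to R^n$ is the projection omitting the last coordinate; this is a semialgebraic set of dimension at most $k$ containing $\Gamma(f)$, and since $\Gamma(f)$ projects onto the $k$-cell $X$, the dimension of $Y'$ is exactly $k$. The key point to extract is that $Y'$ is, generically over $X$, the graph of a semialgebraic function: by o-minimal cell decomposition of $Y'$ compatible with the projection $\pi$, decompose $X$ into finitely many semialgebraic cells $X=X_1\cup\dots\cup X_r$ such that over each $X_j$ the fibres $Y'_x=\{y: (x,y)\in Y'\}$ have constant finite size, say $m_j$ points (the fibres cannot be infinite on a set of full dimension $k$ in $X$, else $\dim Y'>k$; so shrinking we may assume each $m_j$ finite). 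On the top-dimensional cells among the $X_j$ — those of dimension $k$ — the set $Y'$ restricted over $X_j$ is a finite union of graphs of continuous semialgebraic functions $g_{j,1},\dots,g_{j,m_j}$, by the o-minimal description of definable functions (these are the ``sections'' picking out the $i$-th point of each fibre, and they are semialgebraic and continuous after a further cell decomposition of $X_j$).

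Next I would use continuity of $f$ together with the fact that its graph is trapped in $\bigcup_i \Gamma(g_{j,i})$ over each top cell $X_j$ to show that, on each connected component of $X_j$, $f$ agrees with a single one of the $g_{j,i}$. Indeed, on a connected semialgebraic subset the functions $g_{j,i}$ have disjoint graphs (after refining the decomposition so that any two of them either coincide or are everywhere distinct), so the sets $\{x\in X_j : f(x)=g_{j,i}(x)\}$ are relatively closed, pairwise disjoint, and cover $X_j$; connectedness forces $f$ to coincide with one $g_{j,i}$ on each component, hence $f$ is semialgebraic on the union $X_0$ of the top-dimensional cells $X_j$. That gives $\dim(X\setminus X_0)<k$. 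Now I invoke Lemma \ref{hop}-style reasoning adapted to cells: since $X$ is a $k$-cell (hence, after a semialgebraic homeomorphism, an open subset of $R^k$), and $f$ restricted to the open dense semialgebraic subset $X_0$ is semialgebraic, I claim $f$ is semialgebraic on all of $X$. For this, $\cl{\Gamma(f\res X_0)}^{zar}$ is a semialgebraic (indeed algebraic) set of dimension $k$ containing $\Gamma(f\res X_0)$; by continuity of $f$, its graph $\Gamma(f)$ is contained in the closure $\overline{\Gamma(f\res X_0)}$ (as $X_0$ is dense in $X$), which is a semialgebraic set of dimension $k$. So replacing $Y$ by this closure, we are in the situation where additionally the ``exceptional'' locus has shrunk; iterating the cell-decomposition argument on the strictly lower-dimensional set $X\setminus X_0$ and using downward induction on $k$ (the case $k=0$ being trivial, as $X$ is a point and $f$ is a single point of $R^{n+1}$), we conclude $f$ is semialgebraic everywhere.

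The main obstacle I anticipate is the gluing step: ensuring that the finitely many local semialgebraic descriptions of $f$ on the various cells patch together to a single \emph{global} semialgebraic function rather than merely a piecewise one. The cleanest way around this is to observe that ``semialgebraic'' is not a connectedness-sensitive notion: a function that is semialgebraic on each member of a finite semialgebraic partition of its domain is semialgebraic, since its graph is the finite union of the (semialgebraic) pieces. Thus the genuine content is entirely in showing (i) $f$ restricted to the top-dimensional part $X_0$ is semialgebraic — handled by the disjoint-graphs-plus-connectedness argument above — and (ii) that the graph of $f$ over the lower-dimensional remainder is still squeezed into a dimension-$k$ semialgebraic set, so that the inductive hypothesis on $\dim(X\setminus X_0)<k$ applies; here continuity of $f$ and density of $X_0$ in (the relevant part of) $X$ do the work, via Lemma \ref{hop}. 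I would double-check that taking $Y$ to have dimension exactly $k$ (not just $\le k$) is harmless, since if $\dim Y<k$ then no continuous $f$ on the $k$-cell $X$ can have $\Gamma(f)\subseteq Y$ (as $\Gamma(f)$ has dimension $k$), so the hypothesis is vacuous in that range and we may assume $\dim Y=k$ throughout.
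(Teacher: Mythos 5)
Your proof is correct and follows essentially the same route as the paper's: partition $X$ into finitely many cells compatibly with a cell decomposition of $Y$, so that over each piece $Y$ becomes a finite union of disjoint graphs of continuous semialgebraic sections, and then use continuity of $f$ together with disjointness and connectedness to force $\Gamma(f)$ to coincide with a single section on each piece. Where you diverge is the treatment of the lower-dimensional cells of the partition: the paper runs the same stack argument over every cell $S\in\pi(\mathcal C)$, whereas you handle only the $k$-dimensional cells directly and recover the rest from density of $X_0$ in $X$. Your closing ``downward induction on $k$'' is dimensionally mismatched as stated --- to reapply the lemma to $f\res S$ for a $d$-cell $S\subseteq X\setminus X_0$ with $d<k$ you would need a semialgebraic container of dimension $d$, not $k$ --- but the induction is in fact unnecessary: since $X_0$ is dense in $X$ and $f$ is continuous on all of $X$, any point of $\overline{\Gamma(f\res X_0)}$ lying over some $x\in X$ must equal $(x,f(x))$, so $\Gamma(f)=\overline{\Gamma(f\res X_0)}\cap (X\times R)$ is semialgebraic outright. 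With that one-line replacement your argument closes, and it is if anything more careful than the paper's about what the cells of $Y$ sitting over a lower-dimensional $S$ can look like (bands rather than graphs), a case the paper's uniform argument glosses over.
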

\begin{proof}
We may clearly assume that $\pi(Y)=X$. Let $\cal D$ be a cell decomposition of $R^{n+1}$ partitioning $Y$, and $\cal C\sub \cal D$  the collection of those cells that intersect $Y$. So $\pi(\cal C)$ is a finite partition of $X$. It suffices
to show that for every $S\in \pi(\cal C)$, $f_{\res S}$ is semialgebraic. Fix
such $S$. By definition of cell decomposition, we have
$$Y\cap (S\times R)= Y_1\cup\dots\cup Y_m,$$
for some disjoint $k$-cells $Y_i\sub R^{n+1}$,  with $\pi(Y_i)=S$. Of course,
since $\Gamma(f)\sub Y$, we have $\Gamma(f_{\res S})\sub Y\cap (S\times R)$.
Since the $Y_i$'s are disjoint and $f_{\res S}$ is continuous, $\Gamma(f_{\res
S})$ must equal one of the $Y_i$'s. Hence it is semialgebraic.
\end{proof}

\begin{lemma}\label{decomp_pol}
Let $X\sub R^{m+l}$ be a semialgebraic set and $A$ its projection onto the last $l$ coordinates.
Then there are finitely many polynomials $P_i\in R[X,T]$ and semialgebraic sets $A_i$, $i=1, \dots, s$, such that $A=\bigcup_i A_i$, and for every $i$ and $t\in A_i$,
$$(*) \,\,\,\,\,\, \text{ $X_t\sub P_i(-,t)^{-1}(0)$\,\,\,  and \,\,\,\,$\dim X_t = \dim P_i(-,t)^{-1}(0)$.}$$
\end{lemma}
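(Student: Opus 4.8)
The plan is to prove Lemma~\ref{decomp_pol} by a routine compactness/quantifier-elimination argument in the o-minimal structure $\CR$, reducing the problem to a single uniform bound on degrees. First I would apply cell decomposition in $\CR$ to the set $X\sub R^{m+l}$, viewed as a family $\{X_t\}_{t\in A}$ over the last $l$ coordinates: there is a cell decomposition $\mathcal{D}$ of $R^{m+l}$ partitioning $X$ and compatible with the projection onto $R^l$. For each cell $D\in\mathcal{D}$ with $D\sub X$, its projection $\pi_l(D)\sub R^l$ is a cell, and over it $D$ is (after a semialgebraic change of coordinates, or directly) the graph of a semialgebraic function or a band between two such; in any case, for each fixed $t$ in that base cell, the fibre $D_t$ is a semialgebraic subset of $R^m$ whose dimension equals $\dim D-\dim\pi_l(D)$, hence is constant in $t$. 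Grouping the cells by their base, the fibre $X_t$ is a finite union of such $D_t$'s, and $\dim X_t$ is constant on each piece of a finite semialgebraic partition $\{A_i\}$ of $A$.

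Next I would produce the polynomials. Fix one of the pieces $A_i$ and set $d=\dim X_t$ for $t\in A_i$; write $X=\bigcup_{t\in A_i}\{t\}\times X_t$, a semialgebraic subset of $R^{m+l}$. The Zariski closure $\overline{X_t}^{zar}$ is, by Fact~\ref{Kdim}, an algebraic set of dimension exactly $d=\dim X_t$, and it contains $X_t$. The only remaining point is \emph{uniformity}: I want a single polynomial $P_i(X,T)\in R[X,T]$ (not depending on $t$) with $X_t\sub P_i(-,t)^{-1}(0)$ and $\dim P_i(-,t)^{-1}(0)=d$. To get this, note that for each fixed $t$, $\overline{X_t}^{zar}$ is the common zero set of finitely many polynomials in $R[X]$ of bounded degree; the bound can be taken uniform in $t\in A_i$ because ``$x\in\overline{X_t}^{zar}$'' is, for each degree $N$, expressible by a first-order formula over $\CR$ in $(x,t)$ (namely: $x$ satisfies every polynomial of degree $\le N$ vanishing on $X_t$), and by o-minimal compactness / definability of the Zariski closure (a standard fact, see \cite{BCR}) there is an $N$ working for all $t$. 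Then one takes a generic $R$-linear combination (or, if one prefers to stay with a single polynomial, the sum of squares) of the finitely many coefficient-indexed polynomials defining $\overline{X_t}^{zar}$ uniformly; after possibly refining the partition $\{A_i\}$ so that the dimension of the resulting zero set is constant, the pair $(P_i,A_i)$ satisfies $(*)$. Refining a finite semialgebraic partition finitely many times keeps it finite, so this terminates.

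The main obstacle I anticipate is the uniformity of the Zariski closure across the parameter $t$ — i.e.\ producing \emph{one} polynomial $P_i$ rather than a $t$-dependent family — and, relatedly, arranging that $\dim P_i(-,t)^{-1}(0)$ does not jump up for special values of $t$ (a linear combination of generators of $\overline{X_t}^{zar}$ could accidentally cut out something larger than $\overline{X_t}^{zar}$ for some $t$). Both are handled by the same mechanism: the relevant sets (the space of degree-$\le N$ polynomials vanishing on $X_t$, and the dimension of the zero set of a given combination) are uniformly semialgebraic in $t$, so by o-minimality the ``bad'' $t$ where the dimension is wrong form a definable subset of $A$, and one splits $A$ into finitely many pieces on which everything is constant; on each piece a fixed choice of coefficients works. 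I would present this step carefully, citing \cite{BCR} for definability of the Zariski closure and Fact~\ref{Kdim} for the dimension equality, and leave the cell-decomposition bookkeeping of the first paragraph as routine.
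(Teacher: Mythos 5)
Your route is genuinely different from the paper's (fibrewise Zariski closures plus a uniformization over $t$, versus the Zariski closure of the total space plus induction on $\dim A$), and it has a gap at exactly the point you identify as the main difficulty. The lemma requires $P_i\in R[X,T]$, i.e.\ \emph{polynomial} dependence on the parameter. Your construction is fibrewise: you pass to the space $V_{N,t}$ of degree-$\le N$ polynomials in $X$ vanishing on $X_t$ and combine a generating set of it. But generators of $V_{N,t}$ can only be chosen with coefficients that are \emph{semialgebraic} functions of $t$ (a definable family of linear subspaces does not admit polynomially varying bases), so your $P_i(-,t)$ is a semialgebraic family of polynomials in $X$, not an element of $R[X,T]$; and a \emph{constant} choice of coefficients on a positive-dimensional piece of $A_i$ cannot work (for $X=\{(x,t): x=\sqrt{t},\ t>0\}$, a fixed $Q\in R[x]$ vanishing at $\sqrt{t}$ for infinitely many $t$ is $Q=0$, whose zero set has dimension $1\neq 0=\dim X_t$; the correct $x^2-t$ is only visible from the total space, not from $\overline{X_t}^{zar}=\{\sqrt t\}$). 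Two smaller soft spots: a single generic linear combination of generators of $\overline{X_t}^{zar}$ cuts out a hypersurface, not the common zero set (compare $\lambda_1x+\lambda_2y$ with the ideal $(x,y)$), so only your sum-of-squares variant is viable --- and it inherits the non-polynomial dependence on $t$; and the uniform degree bound $N$ is not a quotable statement of \cite{BCR} but needs a compactness argument (which does work, but must be supplied).

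The paper's proof avoids all of this. After reducing to constant fibre dimension $n$ as you do, it applies Fact \ref{Kdim} to the \emph{total space}: there is a single $P_0\in R[X,T]$ with $X\sub P_0^{-1}(0)$ and $\dim P_0^{-1}(0)=\dim X=n+k$, where $k=\dim A$. Then every fibre satisfies $X_t\sub P_0(-,t)^{-1}(0)$, and the exceptional set $A_0=\{t: \dim P_0(-,t)^{-1}(0)>n\}$ has dimension $<k$ by the o-minimal fibre-dimension formula (otherwise $P_0^{-1}(0)$ would have dimension $>n+k$); one concludes by induction on $\dim A$, applied to $X\cap(R^m\times A_0)$. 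Note that for the application in Theorem \ref{k4} a semialgebraic family of polynomials in $X$ would arguably suffice, but it is weaker than what the lemma asserts, and the paper's argument delivers the stated conclusion with less machinery. If you want to keep your fibrewise setup, the repair is precisely to replace $\overline{X_t}^{zar}$ by the fibres of $\overline{X}^{zar}$ and to handle the dimension jumps by induction on the base rather than by refining the partition.
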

\begin{proof}
By partitioning $A$ into finitely many sets, we may assume that there is $n$ with each $\dim X_t=n$. We perform induction on $k=\dim A$. For $k=0$, the result follows from Fact \ref{Kdim}, applied to each $X_t$.  Let $k>0$. By Fact \ref{Kdim} again, there is $P_0\in R[X,T]$ such that $X\subseteq P_0^{-1}(0)$ and both sets have dimension $n+k$. Let
$$A_0=\{t\in A : \dim P_0(-,t)^{-1}(0)>n \}.$$
Then clearly, for every $t\in A\sm A_0$, $(*)$ holds, with $i=0$. Moreover, by o-minimality, $\dim A_0<k$. Hence, by inductive hypothesis, the statement of the lemma also holds for $X\cap (R^m\times A_0)$, and hence it holds for $X$.
\end{proof}

We are now ready to prove our main theorem.

\begin{theorem}\label{k4} Assume \textup{(DP)} and \textup{(DIM)}. Let $f:X\subseteq R^n\rightarrow R$ be a definable smooth function, where $X$ is an open semialgebraic  set. Then $f$ is semialgebraic.
\end{theorem}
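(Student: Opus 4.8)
The plan is to argue by contradiction, following the strategy outlined in the introduction. Applying \textup{(DP)(I)} to the graph $\Gamma(f)$, we obtain a semialgebraic family $\{Z_t\}_{t\in R^m}$ and a definable $S\sub R^m$ with $\dim S=0$ such that $\Gamma(f)=\bigcup_{t\in S} Z_t$. Since $f$ is a function, after refining the family by intersecting each $Z_t$ with a suitable graph (using cell decomposition of the semialgebraic family to cut $Z_t$ into finitely many graphs of semialgebraic partial functions), we may assume each $Z_t=\Gamma(f_t)$ is the graph of a semialgebraic map $f_t:Y_t\sub R^n\to R$, with $Y_t$ an open cell (discarding lower-dimensional pieces), and $X=\bigcup_{t\in S} Y_t$ up to a set of dimension $<n$. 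Then I would apply Lemma \ref{decomp_pol} to the family of graphs to produce finitely many polynomials and a $0$-dimensional family of algebraic sets $\{A_t : t\in S\}$ with $\Gamma(f_t)\sub A_t$ and $\dim A_t=\dim \Gamma(f_t)$, so $\dim A_t=n$ whenever $\dim Y_t=n$; since only finitely many algebraic sets occur, write them as $V_1,\dots,V_N$, the distinct Zariski closures among the $\overline{\Gamma(f_t)}^{zar}$ being among these.

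The core claim is: for any $t,t'\in S$ with $\dim Y_t=\dim Y_{t'}=n$, one has $\overline{\Gamma(f_t)}^{zar}=\overline{\Gamma(f_{t'})}^{zar}$. Granting this, $f$ is semialgebraic: the full-dimensional pieces of $\Gamma(f)$ all lie in one fixed irreducible algebraic set $V$ of dimension $n$, the union of the lower-dimensional $Y_t$'s has dimension $<n$ by \textup{(DIM)}, so by Lemma \ref{hop} (with $Y$ the union of the full-dimensional graphs) $\Gamma(f)\sub \overline{\Gamma(f)}=\overline{(\text{full-dim part})}\sub V$, and then Lemma \ref{L-def} applied to the continuous $f$ on the open cell $X$ with $\Gamma(f)\sub V\cap(\overline X\times R)$ (a semialgebraic set of dimension $n$, after intersecting) gives that $f$ is semialgebraic. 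So everything reduces to the core claim.

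To prove the core claim, suppose toward a contradiction that the set $S'=\{t\in S:\dim Y_t=n\}$ splits according to which $V_i$ equals $\overline{\Gamma(f_t)}^{zar}$, with at least two distinct values occurring. Consider the open sets $W_t=\intr(\overline{Y_t})$. I would check these satisfy hypotheses (i)--(iv) of Proposition \ref{a1}: (i) is immediate since $\overline{W_t}=\overline{Y_t}$ for an open cell and $\intr(\overline{\intr(\overline{Y_t})})=\intr(\overline{Y_t})$; for (ii), $\overline{\bigcup_{t\in S'} W_t}=\overline X$ follows from Lemma \ref{hop} since $X\sm\bigcup_{t\in S'} Y_t$ has dimension $<n$ by \textup{(DIM)}; (iii) may require shrinking — if two $W_t$ overlap on an open set, their semialgebraic $f_t$'s agree there by continuity (as $\Gamma(f)$ is a function), forcing the same Zariski closure, so we may merge them and assume disjointness on the relevant full-dimensional overlaps — and (iv) holds because at least two Zariski closures differ, so at least two $W_t$ are distinct. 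Proposition \ref{a1} then yields an open box $B$ and $W_{t_1}\ne W_{t_2}$ with $B\cap W_{t_i}=:A_i$ open connected semialgebraic and $\intr(\overline{A_1}\cup\overline{A_2})$ connected. Now $f_{t_i}$ restricted to $A_i$ (which sits inside $W_{t_i}$, hence where $f=f_{t_i}$ up to a lower-dimensional set — here one uses that $Y_{t_i}$ is dense in $W_{t_i}$ and $f$ continuous to identify $f$ on $A_i\cap Y_{t_i}$ with $f_{t_i}$, then extends by continuity) is Nash with connected domain, and by smoothness of $f$ the common Nash extension $h$ to $\intr(\overline{A_1}\cup\overline{A_2})$ exists: $h$ is just $f$ itself, which is smooth, and it is semialgebraic on the dense open pieces $A_1,A_2$ hence — wait, not obviously semialgebraic on the overlap, but that is not needed; what is needed is that $f_{t_1}|_{A_1}$ and $f_{t_2}|_{A_2}$ are Nash-concatenable, with concatenation $h=f|_{\intr(\overline{A_1}\cup\overline{A_2})}$, which is a Nash function because it is smooth and agrees with a semialgebraic map on the open dense set $A_1$ (a smooth function equal to a semialgebraic one on an open subset of a connected open domain... is \emph{not} automatically semialgebraic, so instead one takes $h$ to be the genuine Nash extension guaranteed by the concatenability setup — this is exactly the subtle point). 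By Corollary \ref{k3}, $\overline{\Gamma(f_{t_1}|_{A_1})}^{zar}=\overline{\Gamma(f_{t_2}|_{A_2})}^{zar}$, and by Corollary \ref{Zaropen} these equal $\overline{\Gamma(f_{t_1})}^{zar}$ and $\overline{\Gamma(f_{t_2})}^{zar}$ respectively, contradicting that these two Zariski closures were chosen distinct.

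The main obstacle I anticipate is the bookkeeping in verifying hypothesis (iii) of Proposition \ref{a1} and, relatedly, making precise the passage from "$\Gamma(f)=\bigcup Z_t$" to a genuinely disjoint family of open cells on which $f$ restricts to semialgebraic maps — overlaps among the $Y_t$ must be handled by the observation that $f$ is a single function, so two semialgebraic pieces agreeing on an open set have equal Zariski closure and can be absorbed, but one must ensure this merging process terminates and preserves the finiteness coming from Lemma \ref{decomp_pol}. The second delicate point is ensuring that on the box $B$ provided by Proposition \ref{a1}, the function $f$ genuinely realizes the Nash-concatenation of $f_{t_1}|_{A_1}$ and $f_{t_2}|_{A_2}$, i.e. that $f$ agrees with $f_{t_i}$ on $A_i$ and not merely on a dense subset; this should follow from continuity of both $f$ and $f_{t_i}$ together with density of $Y_{t_i}\cap A_i$ in $A_i$, but it is the step that ties the hypothesis "$X$ is open" and "$f$ smooth" directly into the real-algebraic machinery.
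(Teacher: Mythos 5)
Your overall architecture matches the paper's: decompose $\Gamma(f)$ via (DP)(I) into graphs of semialgebraic maps on open cells, reduce everything to showing that all full-dimensional pieces share one Zariski closure, derive a contradiction from Proposition \ref{a1} together with Corollaries \ref{k3} and \ref{Zaropen}, and finish with Lemmas \ref{hop} and \ref{L-def}. But the step you dismiss as ``bookkeeping'' --- arranging hypothesis (iii) of Proposition \ref{a1} by ``merging'' overlapping pieces --- is the actual crux, and neither of your proposed mergings works. Proposition \ref{a1} applies only to a \emph{definable} family $\{W_t\}_{t\in S}$ (its proof applies (DP)(II) and Lemma \ref{var} to $\bigcup_t \bd(W_t)$), and its conclusion only hands you \emph{some} pair $W_{t_1}\neq W_{t_2}$, so to reach a contradiction you must know in advance that distinct $W$'s carry distinct invariants. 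Merging by ``equal Zariski closure'' secures the second point but is not a definable relation on $S$, so the resulting family cannot be fed to Proposition \ref{a1}; merging by ``overlap on an open set'' is definable but not transitive, and it does not guarantee that the two $W$'s produced by the proposition have different Zariski closures. The paper's resolution is exactly what is missing from your argument: Lemma \ref{decomp_pol} is used not to produce ``finitely many algebraic sets $V_1,\dots,V_N$'' (it does not --- it produces finitely many polynomial \emph{families} $P_i(-,t)$, and the sets $P_i(-,t)^{-1}(0)$ genuinely vary with $t$), but to define $B_t=\bigcup_i\{t'\in A_i:\Gamma(f_t)\subseteq P_i(-,t')^{-1}(0)\}$ and the \emph{definable} equivalence relation $t\sim t'\Leftrightarrow B_t=B_{t'}$. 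Claim 1 of the paper shows that overlapping domains force equal Zariski closures, which force $B_t=B_{t'}$; taking $W_t$ to be the interior of the closure of the union over a whole $\sim$-class then yields a definable family whose distinct members are disjoint (Claim 2(iii), via (DIM)), so that the concatenation supplied by Proposition \ref{a1} contradicts $t_1\not\sim t_2$.

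The same device also rescues the step you correctly flag as circular. For $f$ restricted to $A_i=B\cap W_{t_i}$ to be one leg of a Nash-concatenation, it must itself be semialgebraic, and $W_{t_i}$ may contain infinitely many of the original pieces, so its semialgebraicity is precisely what the theorem is trying to prove. In the paper this closes up because every $t'\sim t_i$ satisfies $\Gamma(f_{t'})\subseteq \bigcup_j P_j(-,t_i)^{-1}(0)$, a single semialgebraic set of dimension $n$, whence Lemma \ref{L-def} applies to $f$ on $B\cap W_{t_i}$. Without a definable class equipped with a common containing algebraic set, your proposal has no way to certify that the functions being concatenated are Nash. So the gap is genuine: you need the definable invariant $B_t$, not the Zariski closure itself.
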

\begin{proof} By cell decomposition, we may assume that $X$ is an open cell.
By (DP)(I), there is a semialgebraic family $\{Y_t\}_{t\in R^m}$ of sets $Y_t\sub R^{n+1}$, and a definable $S\sub R^m$ with $\dim S=0$, such that $\Gamma(f)=\bigcup_{t\in S} Y_t$. By cell decomposition again, we may assume that each $Y_t$ is a cell. For every $t\in S$, write $Z_t=\pi(Y_t)$ and $f_t = f_{\res Z_t}$. So $Y_t=\Gamma(f_t)$. Observe that $\dim Z_t=n$ if and only if $Z_t$ is an open cell. Let
$$A=\{ t\in S : \dim Z_t = n\}.$$
Since $X=\bigcup_{t\in S} Z_t$, and for every $t\in S\sm A$, we have $\dim Z_t<n$, by (DIM), we obtain
\begin{equation}
\dim \left(X\sm \bigcup_{t\in A} Z_t\right)<n.\label{Zt}
\end{equation}
Since $X$ is an open cell, by Lemma \ref{hop}, we have $X\sub \overline{\bigcup_{t\in A} Z_t}$. By continuity of $f$,
\begin{equation}
\Gamma(f)\sub \overline{\bigcup_{t\in A} \Gamma(f_t)}.\label{ft}
\end{equation}

By Lemma \ref{decomp_pol}, there are finitely many polynomials $P_i\in R[X,T]$ and semialgebraic sets $C_i$, $i=1, \dots, s$, such that $R^n=\bigcup_i C_i$, and for every $i$ and $t\in R_i$,
$$ \text{ $Y_t\sub P_i(-,t)^{-1}(0)$  and $\dim P_i(-,t)^{-1}(0)=\dim Y_t$.}$$
In particular, for every $i$ and $t\in C_i\cap A$,
$$ \text{ $\Gamma(f_t)=Y_t\sub P_i(-,t)^{-1}(0)$  and $\dim P_i(-,t)^{-1}(0)\le n$.}$$
For every $i$, denote $A_i=C_i\cap A$. So $A=\bigcup_i A_i$. For every $t\in A$, define
$$B_t=\bigcup_i\{t'\in A_i : \Gamma(f_t)\sub P_i(-,t')^{-1}(0) \}\sub A.$$
Observe that for every $t\in A$, we have $t\in B_t$, and hence $\bigcup_{t\in A} B_t=A$.

Our goal is to show that for every $t, t'\in A$, $B_t=B_{t'}$. This will imply the conclusion of the theorem. Indeed, in this case, we obtain that for every $t\in A$, $B_t=A$. Now,  fix $a\in A$, say $a\in A_i$. Take any $t\in A$. So $a\in B_t$. That is, $\Gamma(f_t)\sub P_i(-,a)^{-1}(0)$. Together with (\ref{ft}), this implies that
$$\Gamma(f)\sub \overline{\bigcup_i P_i(-,a)^{-1}(0)},$$
and since the  set on the right is semialgebraic and has dimension $n$, the conclusion of the theorem follows from Lemma \ref{L-def}.


We achieve our goal in Claim 3 below. First, we need two preliminary claims.

$ $\\
\noindent\textbf{Claim 1.} \emph{Let $t,t'\in A$. Then} $$Z_t\cap Z_{t'}\ne\es\,\,\Rarr\,\, \cl{\Gamma(f_t)}^{zar}=\cl{\Gamma(f_{t'})}^{zar}\,\,\Rarr\,\, B_t=B_{t'}.$$
 \begin{proof}[Proof of Claim 1]
For the first implication, let $U=Z_t\cap Z_{t'}$. Since $U$ is open, by Corollary \ref{Zaropen}, $\cl{\Gamma(f_t)}^{zar}=\cl{\Gamma(f_{\res U})}^{zar}=\cl{\Gamma(f_{t'})}^{zar}$.

For the second implication, let $x\in B_t$. Hence $\Gamma(f_t)\subseteq P_i(-,x)^{-1}(0)$. By assumption  $\Gamma(f_{t'})\subseteq P_i(-,x)^{-1}(0)$. Hence $x\in B_{t'}$.
 \end{proof}

Let us define the equivalence relation $\sim$ on $A$, by $t\sim t'$ if and only if $B_t=B_{t'}$.
For $t\in A$, let
$$W_t=\intr\left(\cl{\bigcup_{t\sim t'\in A} Z_{t'}}\right).$$
Clearly, $\{W_t:t\in A\}$ is a definable family of open sets in $R^n$.\\

\noindent\textbf{Claim 2.} \emph{We have:
\begin{enumerate}
 \item[(i)] for every $t\in A$, $W_t=\intr (\cl{W_t})$,
  \item[(ii)] $\overline X = \overline{\bigcup_{t\in T_i} W_t}$,
  \item[(iii)] for every $t, t'\in A$,
$$W_t\ne W_{t'} \,\Rarr\, t\not\sim t'\in A\,\Rarr\, W_t\cap W_{t'}=\es.$$
\end{enumerate}}
\begin{proof}[Proof of Claim 2]
(i) Clear, by definition of $W_t$.

(ii) For every $t\in A$, since $X$  contains $\bigcup_{t\sim t'\in A} Z_{t'}$, we have that $W_t\sub \overline X$. Moreover, since $Z_t$ is open, by definition of $W_t$ we have $Z_t\sub W_t$. Hence, by (\ref{Zt}), $$\dim \left(X\sm \bigcup_{t\in A} W_t\right)<n,$$ and by Lemma \ref{hop}, $\overline X = \overline{\bigcup_{t\in A} W_t}$.

(iii) The first implication is clear. So we need to prove that if $t\not\sim t'\in A$, then $W_t\cap W_{t'}=\es$. Assume otherwise. Note that $W_t\cap W_{t'}$ is open. Hence the intersection
$$\cl{\bigcup_{t\sim s\in A} Z_{s}}\cap \cl{\bigcup_{t'\sim s'\in A} Z_{s'}}$$
contains an open set. But then,  by (DIM), there must be $s\not\sim s'$ with $Z_s\cap Z_{s'}\ne \es$, contradicting Claim 1.
\end{proof}

 In particular, the family $\{W_t : t\in A\}$ satisfies properties Lemma \ref{a1}(i)-(iii).

$ $\\ \noindent\textbf{Claim 3.} {\em For every $t,t'\in A$, $B_t=B_{t'}$.}
\begin{proof}[Proof of Claim 3] Assume not. By Claim 2(iii), the family $\{W_t : t\in A\}$ satisfies all properties of Lemma \ref{a1}. By that lemma, there are an open box $B\sub R^n$ and $t_1, t_2\in A$ with $W_{t_1}\ne W_{t_2}$, such that each $B\cap W_{t_j}$ is semialgebraic, open and connected, and
 $\intr (\cl{B\cap W_{t_1}}\cup \cl{B\cap W_{t_2}})$ is connected.
That is, $f_{\upharpoonright B\cap W_{t_1}}$ and $f_{\upharpoonright B\cap W_{t_2}}$ are Nash-concatenable. By Corollary \ref{k3}, $\cl{\Gamma(f_{\upharpoonright B\cap W_{t_1}})}^{zar}=\cl{\Gamma(f_{\upharpoonright B\cap W_{t_2}})}^{zar}$. Since each $B\cap W_{t_j}$ is open, by (DIM), there are $a_1, a_2\in A$ with $a_j\sim t_j$, such that $B\cap Z_{a_j}$ is open, $j=1, 2$. Since $B\cap W_{t_j}$ is connected, open and semialgebraic, by Lemma \ref{Zaropen}, we obtain
$\cl{\Gamma(f_{\upharpoonright B\cap W_{t_j}})}^{zar}=\cl{\Gamma(f_{\upharpoonright B\cap Z_{a_j}})}^{zar}$, $j=1,2$,
and hence $$\cl{\Gamma(f_{\upharpoonright B\cap Z_{a_1}})}^{zar}=\cl{\Gamma(f_{\upharpoonright B\cap Z_{a_2}})}^{zar}.$$ By Claim 1, $t_1\sim a_1\sim a_2\sim t_2$, contradicting Claim 2(iii).
\end{proof}
This completes the proof of the theorem.
\end{proof}


\section{Examples}\label{sec-examples}

In this section we establish Conditions (DP) and (DIM) in the examples of Proposition \ref{prop-examples}, and state some open questions. In those examples, $R=\R$ and $\WR$ is d-minimal.  Hence, as stated in Remark \ref{rmk-thm}(1), Condition (DIM) becomes redundant, based on the following fact.






 \begin{fact}\label{dim} For every countable family $\{X_t\}_{t\in S}$ of semialgebraic sets $X_t\sub R^n$, we have
$$\dim \bigcup_{t\in S} X_t=max_{t\in S} \dim X_t.$$
\end{fact}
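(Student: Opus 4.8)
The statement to prove is Fact~\ref{dim}: for a countable family $\{X_t\}_{t\in S}$ of semialgebraic subsets of $R^n$, we have $\dim\bigcup_{t\in S} X_t = \max_{t\in S}\dim X_t$. Note $R=\R$ here, so this is genuinely a statement about $\R^n$ with its usual topology, and the word ``countable'' is essential — it is the Baire Category Theorem that does the work.

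\textbf{Plan.} One inequality is trivial: $\dim\bigcup_t X_t \ge \dim X_{t_0}$ for each $t_0$, since $X_{t_0}\subseteq\bigcup_t X_t$ and dimension is monotone (a projection of $X_{t_0}$ with non-empty interior is also a projection of the union with non-empty interior). So the content is $\dim\bigcup_t X_t \le \max_t\dim X_t =: d$. Suppose for contradiction that $\dim\bigcup_t X_t \ge d+1$; then, by definition of dimension, some projection $\rho$ of $\bigcup_t X_t$ onto $d+1$ coordinates has non-empty interior in $R^{d+1}$. Since $\rho\bigl(\bigcup_t X_t\bigr) = \bigcup_t \rho(X_t)$ and each $\rho(X_t)$ is semialgebraic of dimension at most $d$ (the image of a semialgebraic set of dimension $\le d$ under a coordinate projection has dimension $\le d$), we have reduced to the case $n = d+1$: a countable union of semialgebraic subsets of $\R^{d+1}$, each of dimension $\le d$, whose union has non-empty interior.

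\textbf{Key step.} Each semialgebraic $Y\subseteq\R^{d+1}$ with $\dim Y\le d$ has empty interior, hence its closure $\cl Y$ is closed with empty interior — equivalently $\cl Y$ is nowhere dense. (If $\dim Y \le d < d+1$, then $Y$ cannot contain an open box, by the definition of dimension; and a semialgebraic set of dimension $\le d$ in $\R^{d+1}$ has closure of the same dimension by Fact~\ref{Kdim} together with the standard fact $\dim(\cl Y\setminus Y)<\dim Y$, so $\cl Y$ also has empty interior.) Now $\bigcup_t X_t \subseteq \bigcup_t \cl{X_t}$, a countable union of nowhere dense closed sets, hence meagre in $\R^{d+1}$. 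By the Baire Category Theorem, a meagre subset of $\R^{d+1}$ has empty interior, so $\bigcup_t X_t$ has empty interior, contradicting the choice of $\rho$. This contradiction gives $\dim\bigcup_t X_t\le d$, completing the proof.

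\textbf{Main obstacle.} There is no serious obstacle — the argument is a routine application of Baire category once the reduction to the critical dimension $d+1$ is made. The only points requiring minor care are: (i) checking that coordinate projections do not increase dimension of semialgebraic sets, so that the reduction step is legitimate; and (ii) ensuring that passing from $X_t$ to $\cl{X_t}$ preserves the property of having empty interior, which follows from $\dim\cl{X_t} = \dim X_t$ for semialgebraic sets (Fact~\ref{Kdim}, or directly from $\dim(\cl{X_t}\setminus X_t)<\dim X_t$). Everything else is the Baire Category Theorem applied verbatim.

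\begin{proof}
One inequality is immediate: for each $t_0\in S$, $X_{t_0}\subseteq\bigcup_{t\in S}X_t$, so $\dim X_{t_0}\le\dim\bigcup_{t\in S}X_t$; hence $\max_{t\in S}\dim X_t\le\dim\bigcup_{t\in S}X_t$.

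For the reverse inequality, set $d=\max_{t\in S}\dim X_t$ and suppose towards a contradiction that $\dim\bigcup_{t\in S}X_t\ge d+1$. By definition of dimension, there is a projection $\rho\colon R^n\to R^{d+1}$ onto some $d+1$ coordinates such that $\rho\bigl(\bigcup_{t\in S}X_t\bigr)$ has non-empty interior in $R^{d+1}$. Since $\rho\bigl(\bigcup_{t\in S}X_t\bigr)=\bigcup_{t\in S}\rho(X_t)$, and each $\rho(X_t)$ is semialgebraic with $\dim\rho(X_t)\le\dim X_t\le d$, we may replace $X_t$ by $\rho(X_t)$ and assume $n=d+1$ and that $\bigcup_{t\in S}X_t$ has non-empty interior in $R^{d+1}$.

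For each $t\in S$, since $\dim X_t\le d<d+1$, the semialgebraic set $X_t\subseteq R^{d+1}$ contains no open box, so $\intr(X_t)=\es$. Moreover, by Fact \ref{Kdim} (and since $\dim(\cl{X_t}\sm X_t)<\dim X_t$ by o-minimality), $\dim\cl{X_t}=\dim X_t\le d$, so $\cl{X_t}$ is a closed set with empty interior, i.e. nowhere dense. Therefore
$$\bigcup_{t\in S}X_t\ \subseteq\ \bigcup_{t\in S}\cl{X_t}$$
is contained in a countable union of nowhere dense subsets of $R^{d+1}=\R^{d+1}$, hence is meagre. By the Baire Category Theorem, a meagre subset of $\R^{d+1}$ has empty interior, contradicting the fact that $\bigcup_{t\in S}X_t$ has non-empty interior. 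This contradiction shows $\dim\bigcup_{t\in S}X_t\le d$, as required.
\end{proof}
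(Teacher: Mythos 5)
Your proof is correct and follows essentially the same route as the paper's: reduce by a coordinate projection to the case where the union has non-empty interior, then apply the Baire Category Theorem to force some $X_t$ to have too large a dimension. The only cosmetic difference is that you phrase the Baire step via meagreness of $\bigcup_t \cl{X_t}$, while the paper concludes directly that some $X_t$ is dense in a box and hence, being semialgebraic, has full dimension — the same fact in contrapositive form.
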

\begin{proof}
Towards a contradiction, suppose $\dim \bigcup_{t\in S} X_t=m>l=max_{t\in S} \dim X_t.$ After projecting onto some $m$ coordinates, we may assume that $m=n$. Hence, $\bigcup_{t\in S} X_t$ has non-empty interior. By the Baire category theorem, some $X_t$ must be dense in some open box, and, hence, since it is semialgebraic, it must have dimension $n$.
\end{proof}

Condition (DP)(I) is known for our targeted example, namely $\la \overline \R, 2^\Z\ra$, and we provide precise references below. For the second example of Proposition \ref{prop-examples}, it can  easily be extracted from the literature, and we only provide a sketch (due to P. Hieronymi).


\begin{fact}\label{DPI}
Let $\WR$ be one of the structures in Proposition \ref{prop-examples}. Then \textup{(DP)(I)} holds.
\end{fact}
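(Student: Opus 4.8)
The plan is to verify condition (DP)(I) separately for the two families of structures listed in Proposition \ref{prop-examples}, in each case reducing the problem to a statement about the syntactic form of definable sets that is already available in the literature.

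For the structure $\la \overline \R, \alpha^\Z\ra$ (with the case $\alpha = 2$ being van den Dries's original example), I would invoke the quantifier-reduction theorem of \cite{VDD1}: every definable set is defined by a formula of the form $\exists \bar y \in \alpha^\Z\, \varphi(\bar x, \bar y)$ with $\varphi$ semialgebraic. To extract (DP)(I) from this, write $S = (\alpha^\Z)^k \cup \{0\}^k$ or more simply index by tuples $\bar y$ ranging over $\alpha^\Z$; since $\alpha^\Z$ is a discrete (hence $0$-dimensional) definable set, the set of parameter-tuples $S$ has $\dim S = 0$ by (DIM)/Fact \ref{dim}. Then $X = \bigcup_{\bar y \in S} Y_{\bar y}$ where $Y_{\bar y} = \{\bar x : \varphi(\bar x, \bar y)\}$ is a semialgebraic family. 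The only mild care needed is to make the family genuinely semialgebraic (i.e.\ $\bigcup_{\bar y}\{\bar y\}\times Y_{\bar y}$ semialgebraic), which is immediate since $\varphi$ is semialgebraic, and to handle the degenerate existential (empty tuple) by allowing a single extra index point.

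For the case where $P$ is an iteration sequence, I would cite the analysis of \cite{MT} (and the general d-minimal framework of \cite{Mil1}): expansions of the real field by an iteration sequence admit an analogous existential-over-$P$ normal form for definable sets, or at least it follows that every definable set is a $\dim$-$0$-indexed union of semialgebraic sets, uniformly in parameters. Here, as the excerpt notes, the argument is only sketched and attributed to Hieronymi, so I would present the extraction of (DP)(I) from the known structure theory rather than re-proving that structure theory: one shows that the sequence $P$ itself, and finite Cartesian powers of it, are $0$-dimensional definable sets, and that the relevant normal form expresses an arbitrary definable $X$ as a union over such a parameter set of semialgebraic slices.

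The main obstacle is really bookkeeping rather than mathematics: one must make sure the index set $S$ in (DP)(I) is \emph{definable} and $0$-dimensional (not merely countable), and that the family $\{Y_t\}$ is semialgebraic in the precise sense of the Preliminaries — i.e.\ carving off the quantified variables so that what remains is a genuine semialgebraic family over the parameter space. For $\la\overline\R,2^\Z\ra$ this is clean because of van den Dries's explicit normal form; for iteration sequences the corresponding normal form is less cleanly stated in the literature, so the delicate point is citing the right result (or giving the promised sketch) to guarantee that the quantifiers can indeed be restricted to range over a $0$-dimensional definable set. Once the normal form is in hand, (DP)(I) is essentially a rewriting.
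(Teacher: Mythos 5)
Your proposal is correct and follows essentially the same route as the paper: both reduce (DP)(I) to the known existential-over-$P$ normal form (van den Dries / Tychonievich for $\alpha^\Z$, and quantifier elimination plus piecewise-by-terms from Miller--Tyne for iteration sequences), then observe that the index set is a power of the discrete set $P$ and hence $0$-dimensional. The paper simply cites \cite[Corollary 4.1.7]{tycho} directly for case (1) rather than re-extracting the decomposition from \cite{VDD1}, but the underlying argument is the same.
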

\begin{proof}
(1). This is \cite[Corollary 4.1.7]{tycho}. See also \cite[Section 8.6, Remark]{Mil1}. 

(2). As the proof of \cite[Corollary 4.1.7]{tycho} shows, (DP)(I) there follows from quantifier elimination and the fact that definable functions are given piecewise by terms. These two ingredients are also available in the case when $P$ is an iteration sequence (\cite[Theorem 1]{MT}), and hence the result follows easily.
\end{proof}

There are more expansions of $\overline \R$ where quantifier elimination, and hence (DP)(I), might also be possible to establish, see Questions \ref{qn-fast} and \ref{qn-sbd} below. \smallskip

We next focus on proving (DP)(II) in the examples of Proposition \ref{prop-examples}. This will follow from the next general statement.

\begin{prop}\label{prop-chunk} Assume $\WR$ is d-minimal. Then
$$\text{\textup{(DP)(I)} and \textup{(DIM)}\,\,\Rarr\,\, \textup{(DP)(II)}}.$$
\end{prop}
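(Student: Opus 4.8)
The plan is to prove (DP)(II) directly: given a definable set $X\sub R^n$ with $\dim X = k$, I want to produce a semialgebraic chunk of $X$. By (DP)(I) (which we are assuming, together with (DIM)), write $X = \bigcup_{t\in S} Y_t$ for a semialgebraic family $\{Y_t\}$ and a $0$-dimensional definable $S$; after a cell decomposition of the ambient space I may assume each $Y_t$ is a semialgebraic cell. Projecting to a suitable $k$-tuple of coordinates that witnesses $\dim X = k$, and using (DIM) to control dimensions of the pieces, I reduce to the case where $X\sub R^k$ has nonempty interior and each $Y_t$ is a cell of dimension $\le k$. Since $\WR$ is d-minimal, the relevant $0$-dimensional sets are exactly the discrete/countable ones; this is what allows Baire-category reasoning to enter, via Fact \ref{dim} and the d-minimality hypothesis.

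The key step is to locate an open box inside $X$ on which $X$ coincides with a single semialgebraic cell. First, by (DIM) applied to $\{Y_t : \dim Y_t < k\}$, the union of the lower-dimensional pieces has dimension $< k$, so after discarding them I may assume every $Y_t$ with $Y_t\cap (\text{some fixed open box})\ne\es$ is a $k$-cell, i.e.\ open in $R^k$. Now I use the d-minimality of $\WR$ more seriously: the family $\{Y_t : t\in S\}$ is a countable family of open semialgebraic sets covering an open subset of $X$. By a Baire-category argument (in the spirit of Fact \ref{dim}), some $Y_{t_0}$ is somewhere dense, hence — being semialgebraic and open — actually contains an open box $B$. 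The remaining task is to shrink $B$ so that $B\cap X\sub Y_{t_0}$, i.e.\ so that no other piece $Y_{t'}$ contributes points inside $B$ that are not already in $Y_{t_0}$. For this I would consider $X\sm Y_{t_0} = \bigcup_{t\in S}(Y_t\sm Y_{t_0})$; if this set were dense in every subbox of $B$, then another Baire-category application would force some $Y_{t'}\sm Y_{t_0}$ to be somewhere dense in $B$, hence (being a difference of a semialgebraic cell and an open semialgebraic box, so semialgebraic) to have dimension $k$ there, and by taking $B$ small we can separate: near a generic point of $B$ the finitely-many-at-a-time behaviour of semialgebraic sets lets us choose a subbox avoiding the closure of such a piece. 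Iterating/refining, I obtain an open box $B'\sub B$ with $B'\cap X = B'\cap Y_{t_0}$, and then $Y := B'\cap Y_{t_0}$ (or a $k$-cell inside it) is the desired semialgebraic chunk: it is a semialgebraic cell, $\dim Y = k = \dim X$, and every $y\in Y$ has the box $B'$ witnessing $B'\cap X\sub Y$.

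I expect the main obstacle to be the last refinement step — passing from ``$Y_{t_0}$ contains a box'' to ``some box meets $X$ only inside $Y_{t_0}$''. The subtlety is that the countably many other pieces $Y_{t'}$ could each be full-dimensional and collectively dense near every point of $B$; controlling this requires combining the Baire-category argument (to get that only countably many pieces are relevant and each that is ``large'' near a point is semialgebraic of dimension $k$) with (DIM) (to ensure the union of the genuinely lower-dimensional excess is negligible) and with a compactness/finiteness argument on the semialgebraic pieces that do intrude (there can be only finitely many ``germs'' of semialgebraic $k$-cells through a generic point, so one can dodge them). Getting the bookkeeping right so that a single box works simultaneously against all intruding pieces — rather than shrinking infinitely often — is where care is needed; I would handle it by first passing to the generic point of an appropriate definable set where the local picture stabilizes, then reading off the box there.
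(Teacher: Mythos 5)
There is a genuine gap, concentrated exactly where you predicted, and your opening reduction compounds it. The projection step is not a valid reduction: a semialgebraic chunk is a subset $Y\sub X\sub R^n$ together with, for each $y\in Y$, an open box $B\sub R^n$ satisfying $B\cap X\sub Y$ --- a condition in the ambient $R^n$ that neither pulls back along a coordinate projection nor can be checked on $\pi(X)\sub R^k$. Note that when $k=n$ (so $X$ has interior) the problem is trivial: once Baire category gives a piece $Y_{t_0}$ containing an open box $B$, that box is itself a chunk, since $B\cap X=B\sub Y_{t_0}$. The entire difficulty lives in the case $k<n$, which your reduction discards. There the countably many pieces $Y_{t'}$ --- including the lower-dimensional ones you propose to throw away, which still belong to $X$ and still obstruct $B\cap X\sub Y$ --- can accumulate onto points of $Y_{t_0}$ from the transverse directions; the paper's example $X=2^\Z\cup\{0\}$ shows such accumulation genuinely destroys the chunk property at the bad points. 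Your proposed fix (``only finitely many germs of semialgebraic $k$-cells through a generic point, so one can dodge them'') is unjustified: the pieces are indexed by a countable set, not a finite one, and nothing in (DP)(I) or (DIM) bounds how many of them pass near a given point or forces the bad locus to be avoidable.

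The paper closes exactly this gap with a structural input your argument lacks: Fornasiero's theorem (Fact \ref{fact-for}) that in a d-minimal structure every definable set is a \emph{finite} union of embedded manifolds $X_1,\dots,X_l$, i.e.\ sets that are locally graphs of continuous functions over open subsets of $R^k$. The finiteness is what makes ``dodging'' legitimate: one finds a $k$-cell either inside $\bigcap_i X_i$ (where the finitely many local graph charts can be intersected) or inside some $X_i\sm\bigcup_{j\ne i}\overline{X_j}$ (where $X$ locally coincides with the single manifold $X_i$), and Lemma \ref{LchunkY} then converts the local-graph property into the chunk property. In your proposal d-minimality enters only through ``$0$-dimensional $=$ countable'' plus Baire category, which is not enough; you would need to import Fact \ref{fact-for} or prove a substitute for it.
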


Before starting the proof of Proposition \ref{prop-chunk}, we need some prerequisites. Let $X\subseteq R^n$ be a definable set. Following Fornasiero \cite[Definition 4.1]{For}, we say that $X$ is an \emph{embedded manifold of dimension $m$} if for every $x\in X$, there is an open box $B\sub R^n$ containing $x$, such that, after permuting coordinates, $X\cap B$  is the graph of a continuous function $f:U\sub R^m\to V\sub R^{n-m}$, where $B=U\times V$.
The following result is stated in \cite{For}, but it can also be easily extracted from the work in \cite{tha}.

 \begin{fact}\label{fact-for}\cite[Fact 2.2]{For} Every definable set is a finite union of embedded manifolds.
\end{fact}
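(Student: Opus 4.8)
The plan is to argue by induction on $m=\dim X$. In the base case $m=0$ one uses the basic feature of d-minimality that a $0$-dimensional definable set is a finite union of \emph{discrete} definable sets (provable e.g.\ via boundedness of the Cantor--Bendixson rank of definable sets), together with the observation that a discrete definable set $D$ is an embedded $0$-manifold: each $x\in D$ is isolated by an open box $B$ with $B\cap D=\{x\}$, which is exactly the graph of the map $R^0\to R^n$ with value $x$. It is worth flagging that the naive inductive step --- peel off the relatively open locus of $x\in X$ near which $X$ is the graph of a continuous function of some $m$ of the coordinates --- \emph{fails}: for $X=(0,1)^m\times(2^{\Z}\cup\{0\})$ that locus equals $(0,1)^m\times 2^{\Z}$, whose complement in $X$ is $(0,1)^m\times\{0\}$, still of dimension $m$. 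One must first separate the ``stacked sheets''.

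For the inductive step, fix $X\subseteq R^n$ with $m=\dim X\geq1$. First, the relatively open locus $X_0$ of points near which $X$ has local dimension $m$ satisfies $\dim(X\setminus X_0)<m$ --- at a point of $X\setminus X_0$ some box meets $X$ in a set of dimension $<m$, so $X\setminus X_0$ cannot have an ``$m$-dimensional point'' --- hence by induction it suffices to treat $X_0$, and we may assume $X$ is locally of pure dimension $m$. Then each point of $X$ has a neighbourhood on which some $m$-coordinate projection $p_J$ has image with nonempty interior; stratifying $X$ into finitely many relatively open pieces, and then performing a further (routine but fiddly) induction based on the dimension theory of d-minimal structures, one reduces to the situation where $J$ is fixed, $p_J$ has $0$-dimensional --- hence discrete --- fibres over an open box $U\subseteq R^m$, and $X\subseteq p_J^{-1}(U)$. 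Decomposing $X$ into its fibrewise Cantor--Bendixson layers over $U$ --- finitely many, again by d-minimality --- reduces matters finally to a definable $X$ all of whose fibres over $U$ are discrete subsets of $R^{n-m}$; the task is then to show that such an $X$ is, modulo a definable remainder of dimension $<m$, a finite union of graphs of continuous definable functions $U'\subseteq R^m\to R^{n-m}$, i.e.\ of embedded $m$-manifolds.

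This last task is where the main obstacle lies, and where the d-minimal toolkit is used in earnest. It rests on two structural inputs, each a tameness property beyond o-minimality available in d-minimal structures: a local finiteness property --- a definable family of discrete subsets of $R^{n-m}$ has, over a definable set of full dimension in $U$, locally uniformly boundedly many ``sheets'' near a generic point --- and a generic continuity property --- a definable partial map $U\to R^{n-m}$ is continuous off a definable set of dimension $<m$. Granting these and running them uniformly over semialgebraic chunks of $U$, so that on each chunk the finitely many sheets become honest continuous definable functions on an open box, one exhibits a full-dimensional piece of $X$ as a finite union of embedded $m$-manifolds; the various lower-dimensional remainders accumulated along the way are dealt with by the induction hypothesis, and assembling everything gives the desired finite decomposition. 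Establishing the two structural inputs, and the bookkeeping that propagates the local picture across semialgebraic chunks, is exactly the substantive content --- carried out in \cite{For}, and extractable from \cite{tha}.
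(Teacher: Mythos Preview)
The paper does not give a proof of this fact at all: it is quoted as \cite[Fact 2.2]{For}, with the added remark that it can also be extracted from \cite{tha}. Your write-up therefore goes well beyond what the paper does, sketching the inductive argument that underlies the cited result. The strategy you outline --- induction on dimension, reduction to locally pure dimension, projection to $R^m$ with $0$-dimensional fibres, fibrewise Cantor--Bendixson decomposition to reach discrete fibres, then generic continuous selection of the sheets --- is indeed the standard route in the d-minimal literature and is in line with what one finds in \cite{For} and \cite{tha}.

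Two small remarks on the sketch itself. First, the parenthetical ``$0$-dimensional --- hence discrete'' is a slip: in a d-minimal structure a $0$-dimensional definable set is a finite union of discrete sets but need not itself be discrete (your own example $2^{\Z}\cup\{0\}$ shows this), and your very next sentence --- invoking the Cantor--Bendixson layers --- makes clear you know this; it is just an inconsistency in the write-up. Second, the phrase ``running them uniformly over semialgebraic chunks of $U$'' imports terminology specific to this paper, and dependent on (DP)(II), into a statement that in \cite{For} is proved for arbitrary d-minimal expansions; in the paper this fact is in fact used \emph{in the proof of} (DP)(II) (Proposition~\ref{prop-chunk}), so semialgebraic chunks are not available at that point. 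The underlying argument uses only d-minimal dimension theory and cell-like decompositions, not the chunk machinery. Neither point undermines the overall correctness of your outline.
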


\begin{lemma}\label{LchunkY}
Let $X\sub R^n$ be a definable set of dimension $k$. Let $Y\sub X^n$ be a $k$-cell.  Assume that for every $y\in Y$,
\begin{align*}(*)_{y, X}\,\,\,\,\,&\text{ there is an open box $B\sub R^n$ containing $y$, such that $X\cap B$ is }\\ &\text{the graph of a continuous function $f:U\sub R^k\to V\sub R^{n-k}$, } \\ &\text{where $B=U\times V$.} \end{align*}
Then $Y$ is a semialgebraic chunk of $X$.
\end{lemma}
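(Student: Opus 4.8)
\textbf{Plan for the proof of Lemma \ref{LchunkY}.}
The plan is to unwind the three requirements in the definition of a semialgebraic chunk and check each in turn: (a) $Y$ is a semialgebraic cell, (b) $\dim Y=\dim X$, and (c) every $y\in Y$ has an open box $B$ with $B\cap X\sub Y$. Parts (a) and (b) are immediate from the hypotheses: $Y$ is assumed to be a $k$-cell, hence a semialgebraic cell, and $\dim Y=k=\dim X$ by assumption. So the whole content is in establishing (c).

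For (c), fix $y\in Y$. By $(*)_{y,X}$ there is an open box $B_0=U\times V\sub R^n$ containing $y$ (after permuting coordinates), such that $X\cap B_0=\Gamma(f)$ for a continuous $f:U\sub R^k\to V\sub R^{n-k}$. In particular $X\cap B_0$ is an embedded $k$-manifold near $y$, and the coordinate projection $p:\Gamma(f)\to U$ onto the first $k$ coordinates is a homeomorphism. The idea is now to use that $Y$ is itself a $k$-cell sitting inside $X$, so $Y\cap B_0$ is a relatively open subset of the $k$-manifold $X\cap B_0$ that contains $y$; hence, shrinking $B_0$ to a smaller box $B\ni y$, we can arrange $X\cap B\sub Y$. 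To make this precise: since $Y$ is a $k$-dimensional semialgebraic cell and $y\in Y$, and since $Y\sub X$, the set $p(Y\cap B_0)\sub U$ has dimension $k$ and hence non-empty interior; moreover $Y\cap B_0$ is again (the restriction of) a graph over $p(Y\cap B_0)$ because it is contained in $\Gamma(f)$. Thus near $p(y)$ the two graphs $Y\cap B_0$ and $X\cap B_0=\Gamma(f)$ agree on an open subset of $U$. Picking an open box $B_1\sub U$ around $p(y)$ contained in $\intr(p(Y\cap B_0))$, and letting $B=B_1\times V'$ for a small enough box $V'\ni f(p(y))$ with $B\sub B_0$, we get $X\cap B=\Gamma(f)\cap B=\Gamma(f_{\res B_1})$, and since $B_1\sub p(Y\cap B_0)$ and $Y\cap B_0$ is the graph of $f$ over $p(Y\cap B_0)$, this forces $X\cap B\sub Y$. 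Then $B$ witnesses the chunk condition at $y$.

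The one point requiring care — and the main obstacle — is the claim that $Y\cap B_0$ is genuinely an \emph{open} subset of the manifold $X\cap B_0$ near $y$, equivalently that $p(Y\cap B_0)$ has interior at $p(y)$, rather than being, say, a lower-dimensional slice of $\Gamma(f)$ through $y$. This is where one uses that $Y$ is a $k$-\emph{cell} of the same dimension as $X$, not merely a $k$-dimensional subset: a $k$-cell is, after a coordinate permutation, the graph of continuous semialgebraic functions over an open cell in $R^k$, so its projection to a suitable $k$-tuple of coordinates has non-empty interior, and one has to check that this $k$-tuple can be taken to be the same one appearing in $(*)_{y,X}$ (otherwise $Y$ near $y$ and $X$ near $y$ would be graphs over different coordinate sets, both being $k$-manifolds of dimension $k$, which near $y$ forces the coordinate sets to determine the same local tangent directions — a standard fact about embedded $k$-manifolds). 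Handling this coordinate-matching cleanly is the technical heart of the argument; everything else is bookkeeping with boxes and continuity.
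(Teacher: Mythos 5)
Your overall structure matches the paper's: parts (a) and (b) of the chunk definition are immediate, and for (c) you use $(*)_{y,X}$ to present $X$ near $y$ as a graph $\Gamma(f)$ over the first $k$ coordinates and then shrink the box so that every fiber of $X\cap B$ over the base already lies in $Y$. The bookkeeping at the end (the box $B_1\times V'$, and the fact that $\Gamma(f_{\res B_1})\sub Y$ once $B_1\sub p(Y\cap B_0)$) is fine. The problem is that the step you yourself flag as ``the main obstacle'' is the entire mathematical content of the lemma, and your proposal does not close it. From ``$Y\cap B_0$ has dimension $k$'' you get only that $p(Y\cap B_0)$ has \emph{non-empty interior somewhere}; what you need is that $p(y)$ is an \emph{interior point} of $p(Y\cap B_0)$ for the arbitrary $y\in Y$ you fixed. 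Without that, no box $B_1\ni p(y)$ fits inside $p(Y\cap B_0)$, and points $(a,f(a))$ with $a\in B_1\sm p(Y\cap B_0)$ lie in $X\cap B$ but not in $Y$. Your proposed repair --- matching the coordinate $k$-tuple of the cell $Y$ with that of $(*)_{y,X}$ via ``local tangent directions of embedded $k$-manifolds'' --- is not a proof: the graphs here are only continuous, so there are no tangent directions to compare, and in any case a $k$-cell can be a graph over several coordinate $k$-tuples at once; the issue is not which tuple one uses but whether the relevant projection is an open map.

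The paper closes exactly this gap with Johns' o-minimal open mapping theorem (a definable invariance of domain) \cite{johns}: if $p$ is a coordinate projection exhibiting the $k$-cell $Y$ as a graph over the open set $p(Y)\sub R^k$, and $\pi$ is the projection onto the $k$ coordinates appearing in $(*)_{y,X}$, then $\pi\circ p^{-1}:p(Y)\to R^k$ is a continuous injective semialgebraic map between $k$-dimensional spaces, hence open, so $\pi(Y)$ is open in $R^k$ and in particular $\pi(y)\in\intr(\pi(Y))\sub\intr(p(Y\cap B_0))$ after intersecting with $U$. Some such invariance-of-domain input is unavoidable at this point, and it is the ingredient missing from your sketch; once it is supplied, the rest of your argument goes through.
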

\begin{proof}
Since $Y$ is a $k$-cell, there is a projection $p: Y\to R^k$ onto some $k$ coordinates, which is injective. Let $\pi:R^n\to R^k$ be the projection onto the first $k$ coordinates. Then the map $\pi \circ p^{-1}  : p(Y) \to R^k$ is a continuous injective semialgebraic map, and hence, by \cite{johns}, its image $\pi(Y)\sub R^k$ is open.

Now take $y\in Y$, and let $B=U\times V$ and $f$ be as in our assumptions. Since $\pi(y)\in U\cap \pi(Y)$, there is an open box $U'\sub U\cap \pi(Y)$ containing $\pi(y)$. Let $B'=U'\times V$. So $y\in B'$. Moreover, for every $a\in U'$, there is $b\in R^{n-k}$ such that $(a, b)\in Y$. Since also $(a, b)\in X\cap B' =\Gamma (f_{\res B'})$, we have that $(X\cap B')_a$ is a singleton, namely, $\{b\}$. Hence, $X\cap B'\sub Y$, as needed. \end{proof}


\begin{proof}[Proof of Proposition \ref{prop-chunk}]
Let $X\sub R^n$ be a definable set of dimension $k$. By Fact \ref{fact-for}, $X$ is a finite union of embedded manifolds $X_1, \dots, X_l$. We split two (non-exclusive) cases:\\

\noindent\textbf{Case I:} 
$\dim \bigcap_i X_i =k$. In this case, by (DP)(I) and (DIM), there is a $k$-cell $Y\sub \bigcap_i X_i$. Take $y\in Y$. We claim that property $(*)_{y,X}$ from Lemma \ref{LchunkY} holds. Indeed, for every $i=1, \dots, l$, since $Y\sub X_i$ and $X_i$ is an embedded manifold, $(*)_{y,X_i}$ holds. Let $B_i\sub R^n$ be the corresponding open box containing $y$. Then $B=\cap B_i$ witnesses $(*)_{y,X}$. \\

\noindent\textbf{Case II:} For some $i\in \{1, \dots, l\}$,
$$\dim \left(X_i \sm \bigcup_{j\ne i} \overline{X_j}\right)=k.$$
In this case, by (DP)(I) and (DIM), there is a $k$-cell $Y\sub X_i \sm \bigcup_{j\ne i} \overline{X_j}$. Take $y\in Y$. We claim that property $(*)_{y,X}$ from Lemma \ref{LchunkY} holds. Indeed, since $Y\sub X_i$ and $X_i$ is an embedded manifold, $(*)_{y, X_i}$ holds. Namely, there is an open box $B\sub R^n$ containing $y$, such that $X_i\cap B$ is the graph of a continuous function $f:U\sub R^k\to V\sub R^{n-k}$, where $B=U\times V$. On the other hand, since $y\in Y$ and $Y\cap \bigcup_{j\ne i} \overline{X_j}=\es$, there is an open box $B'=U'\times V'\sub B$ containing $y$, such that $X\cap B'=X_i\cap B'$. Since $X_i\cap B'=\Gamma(f_{\res U'})$, we are done.
\end{proof}

\begin{proof}[Proof of Proposition \ref{prop-examples}]
By Remark \ref{rmk-thm}, Fact \ref{DPI} and Proposition \ref{prop-chunk}.
\end{proof}
We next apply our theorem to a structure beyond the d-minimal setting.

 \begin{cor}\label{cor-khani}
 Let $\WR=\la \overline \R, \R_{alg}, 2^\Z\ra$, where $\R_{alg}$ is the field of real algebraic numbers. Then every definable smooth function with open semialgebraic domain is semialgebraic.
 \end{cor}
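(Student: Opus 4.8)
The plan is to reduce the structure $\WR = \la \overline\R, \R_{alg}, 2^\Z\ra$ to the setting of Theorem \ref{main} by verifying its hypotheses relative to a suitable o-minimal base, and then to handle the discrepancy between that base and the pure real field by hand. The key difficulty is that $\WR$ has \emph{d-minimal open core} but is not itself d-minimal, and moreover (DP)(II) genuinely fails for $\WR$ over $\overline\R$: a singleton like $\{\pi\}$ (say, from $\R_{alg}$-defined behavior near a transcendental point) need not be a semialgebraic chunk. So the argument cannot be a direct citation of Theorem \ref{k4}; instead I expect to run the proof of Theorem \ref{k4} with $\cal R$ replaced by an intermediate o-minimal structure and then descend.

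First I would recall the known structure theory of $\la \overline\R, \R_{alg}, 2^\Z\ra$: by the work on expansions by a dense-codense o-minimal subfield together with $2^\Z$, every definable set is a countable (indeed uniformly countable) union of sets definable in $\la \overline\R, 2^\Z\ra$-adjacent pieces, and the open core is d-minimal; in particular, a definable set has dimension $0$ iff it is countable, so the analogue of (DIM) (Fact \ref{dim}) holds via Baire category. Thus (DP)(I), in the countable form of Remark \ref{rmk-thm}(2), is available. The point of Remark \ref{rmk-thm}(2) is precisely that, over $R=\R$, the countable version of (DP)(I) already yields the equivalence ``countable $\Leftrightarrow$ $0$-dimensional'' and makes (DIM) redundant, so the only thing missing is (DP)(II).

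Next I would argue that (DP)(II) is not actually needed in full: tracing the proof of Theorem \ref{k4}, (DP)(II) enters only through Lemma \ref{var} and Proposition \ref{a1} (to locate a semialgebraic chunk inside the boundary set $\bigcup_t \bd(W_t)\cap X$), and through Lemma \ref{hop} indirectly. In our setting the relevant boundary sets $\bd(W_t)\cap X$ are defined in $\WR$, have dimension $n-1$, and — crucially — since the open core of $\WR$ is d-minimal and $X$ is open semialgebraic, an $(n-1)$-dimensional definable subset of $X$ must contain a \emph{semialgebraic} cell of dimension $n-1$ near which the ambient definable set is locally that cell. This is exactly the content one extracts from Fact \ref{fact-for} and Lemma \ref{LchunkY}: the proof of Proposition \ref{prop-chunk} only uses (DP)(I) and (DIM), \emph{not} full d-minimality of $\WR$, it uses that definable sets are finite unions of embedded manifolds and that $0$-dimensional $\Leftrightarrow$ countable. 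Both facts hold for $\la \overline\R, \R_{alg}, 2^\Z\ra$ (the first because it has d-minimal open core, the decomposition into embedded manifolds being a consequence of local dimension theory; the second as noted above). Hence Proposition \ref{prop-chunk} applies verbatim and delivers (DP)(II) after all — so the earlier worry dissolves, and the corollary follows directly from Theorem \ref{main} together with Remark \ref{rmk-thm}(2).

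The step I expect to be the genuine obstacle is verifying the hypotheses of Proposition \ref{prop-chunk} — namely that $\la \overline\R, \R_{alg}, 2^\Z\ra$ has d-minimal open core in the precise sense needed (every definable subset of $\R$ is a union of an open set and a countable discrete set) and that Fact \ref{fact-for}'s decomposition into embedded manifolds survives. This requires invoking the quantifier-analysis/open-core results for this particular structure from the literature (the combination of van den Dries' treatment of $\la \overline\R, 2^\Z\ra$ with the dense-pair machinery for $\R_{alg}$), and checking that ``countable $\Leftrightarrow$ dimension $0$'' is preserved — since a priori adding $\R_{alg}$ could create $0$-dimensional sets that are uncountable. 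Once that bookkeeping is in place, everything else is a direct application of Theorem \ref{main} via Remark \ref{rmk-thm}(2), with Proposition \ref{prop-chunk} supplying (DP)(II) and Fact \ref{dim} supplying (DIM).
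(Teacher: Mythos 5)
There is a genuine gap, and it sits exactly at the point you flagged as the ``genuine obstacle'' and then declared dissolved. Your claim that Proposition \ref{prop-chunk} ``applies verbatim and delivers (DP)(II) after all'' is false for $\WR=\la \overline \R, \R_{alg}, 2^\Z\ra$: the paper explicitly observes, right after the corollary and again in Question \ref{qn-dense}, that this structure does \emph{not} satisfy (DP)(II). Concretely, $\R_{alg}$ is definable, dense and of dimension $0$, so a semialgebraic chunk of it would have to be a singleton $\{y\}$ with an open box $B\ni y$ such that $B\cap \R_{alg}\sub\{y\}$ --- impossible by density. Every hypothesis you invoke to rescue Proposition \ref{prop-chunk} also fails: $\WR$ is not d-minimal; Fact \ref{fact-for} fails ($\R_{alg}$ is not a finite union of embedded manifolds, since an embedded $0$-manifold is discrete); and the equivalence ``countable $\Leftrightarrow$ dimension $0$'' fails in both directions as a tool here, e.g.\ $\R\sm\R_{alg}$ is definable, has empty interior (hence dimension $0$ in the paper's sense) and is uncountable. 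So the corollary cannot be obtained by checking (DP) and (DIM) for $\WR$ itself.

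The paper's actual argument is a short reduction that bypasses all of this. Since $f$ is continuous on the open set $X$, its graph is closed in $X\times\R$, so $\Gamma(f)=\overline{\Gamma(f)}\cap(X\times\R)$. The closure $\overline{\Gamma(f)}$ is the complement of an open definable set, and by Khani's theorem every open set definable in $\la \overline \R, \R_{alg}, 2^\Z\ra$ is already definable in the reduct $\la \overline \R, 2^\Z\ra$; hence $f$ itself is definable in $\la \overline \R, 2^\Z\ra$. One then applies Theorem \ref{main} to \emph{that} structure, which satisfies (DP) and (DIM) by Proposition \ref{prop-examples}. Your proposal gestures at the right literature (the dense-pair/open-core analysis of this structure) but uses it in the wrong direction: the point is not to verify the chunk condition for $\WR$, but to show the particular function $f$ lives in a reduct where the main theorem already applies.
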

\begin{proof}
Let $Y=\overline{\Gamma(f)}$. It is easy to see that $Y\cap (X\times \R)=\Gamma(f)$.
By \cite{khani}, every open definable set in $\WR$ is definable in $\la \overline \R, 2^\Z\ra$. We can thus apply Theorem \ref{main} to $f$.
\end{proof}

Observe that the above structure $\WR$ itself does not satisfy (DP). See Question \ref{qn-dense} below for further discussion.\smallskip

\subsection{Optimality}

The following example, due to P. Hieronymi, shows that if we remove the assumption that $\cal R$ is a reduct of $\overline R$ from Theorem \ref{main}, then the theorem fails. Note that the structure $\la\R_{an},e^{2\pi \Z}\ra$ is d-minimal, by \cite[Theorem 3.4.2]{Mil1}, and satisfies (DP)(I), by  \cite[Corollary 4.1.7]{tycho}. Thus, by Remark \ref{rmk-thm} and Proposition \ref{prop-chunk}, it satisfies all the assumptions of Theorem \ref{main}.

\begin{example}\label{exa-Philipp} The structure
$\WR =\la \overline \R_{an},e^{2\pi \Z}\ra$  defines new smooth functions with domain $\R$.
\end{example}
\begin{proof}
Let $\lambda:\R\rightarrow e^{2\pi\Z}$ be the function $\lam(x)=$ the biggest element of $e^{2\pi\Z}$ lower than $x$. Let $f:\R\to\R$ be the map $f(x)=\sin \log x$. Clearly, $f$ is not definable in the o-minimal $\R_{an}$, since its zero set is an infinite discrete set. We show that $f$ is definable in $\WR$. For every $x\in \R$, we have:
\[\begin{aligned}f(x)&=\sin\log\left(\lambda(x)\frac{x}{\lambda(x)}\right)\\&=\sin\log\lambda(x)+
\sin\log\left(\frac{x}{\lambda(x)}\right)\\&=\sin \log\left(\frac{x}{\lambda(x)}\right).
\end{aligned}\]
But $\frac{x}{\lambda(x)}\in [1,e^{2\pi}]$ and $\log([1,e^{2\pi}])=[0,2\pi]$. Therefore, $f$ is definable in $\la\R_{an},e^{2\pi \Z}\ra$.
\end{proof}

\begin{remark}\label{rmk-optimal} We make three further comments towards the optimality of our results:\smallskip

\noindent (1) The assumption that the domain of $f$ is open in Theorem \ref{main} is not essential. For example, one could consider definable smooth functions between semialgbebraic manifolds instead. Working via the chart maps, a generalization of our theorem in that setting eventually boils down to the current one.\smallskip

\noindent (2). The reader may wonder whether assuming only $R=\R$ (and not d-minimality) can yield more of our properties. Condition (DIM) fails: consider, for example, $\la \overline \R, P\ra$, where $P$ is the set of non-algebraic numbers. Let $f(t,s)=t+s:\R^2\to \R$. Then the definable family of singletons $\{(f(t,s)\}_{(t,s)\in P^2}$ violates (DIM).

\noindent (3). Assume $\WR$ is d-minimal and not o-minimal. We can define new $\mathcal{C}^n$-functions for all $n$. Indeed, it is easy to see that such a structure defines an infinite discrete set $A\sub R$. Let $s:A\rightarrow A$ be the successor function in $A$. For $n\in \N$, let $f_n:(0,\infty)\rightarrow R$ be given via, for every $t\in A$, $x\in [t,s(t)]\mapsto (x-t)^n(x-s(t))^n$. It is easy to see that $f_n$  is  $\mathcal{C}^{n-1}$, but not $\mathcal{C}^n$. It is also clear that $f_n$ is not semi-algebraic since its zero set is $A$.
\end{remark}

\subsection{Open questions}\label{sec-open}

We conclude with a list of open questions that arise naturally from this work.

\begin{question+} \textbf{Strong decomposition property.} Let us call a set $X\sub R^n$ a \emph{strongly embedded manifold} if  there is a semialgebraic family $\{Y_t\}_{t\in R^m}$ of subsets of $R^n$, and a definable set $S\sub R^m$ with $\dim S=0$, such that $X=\bigcup_{t\in S} Y_t$, and every $Y_t$ is a semialgebraic chunk of $X$.
\smallskip

{\em
  Assume  \textup{(DP)} and  \textup{(DIM)}. Is every definable set a finite union of strongly embedded manifolds?}
\end{question+}

\begin{question+}\label{qn-dense} \textbf{Necessary conditions.} Clearly,  \textup{(DP)} and  \textup{(DIM)} are not necessary conditions for Theorem \ref{main}. For example, in the dense pair $\la \overline \R, \R_{alg}\ra$, or in the example of Corollary \ref{cor-khani}, there are again no new smooth definable functions, but   (DP)(II) fails.
\smallskip

{\em
  Is there a reasonable way to relax Conditions  \textup{(DP)} and  \textup{(DIM)} into necessary and sufficient conditions for Theorem \ref{main}?}
\end{question+}

\begin{question+} \textbf{Smooth functions with arbitrary domains.} Here we relax the assumption on the domain of $f$ in Theorem \ref{main}.

  \smallskip

  {\em
Let $f: X\sub R^n\to R$ be a definable smooth function, with open connected domain $X$. Is it true that $f$ is the restriction to $X$ of a semialgebraic function?}
\end{question+}

The additional connectedness assumption is necessary. Indeed, let $\WR=\la \overline \R, 2^{\Z}\ra$, and $f:\bigcup_{t\in 2^\Z} (t, 2t)\to \R$ be given via: $f(x)=0$, if $x\in (t, 2t)$, for some $t\in 2^{2\Z}$, and $f(x)=1$ if $x\in (t, 2t)$ for some $t\in 2^{2\Z+1}$. Then $f$ is definable in $\WR$, with open domain, and it is smooth but not semialgebraic.

\begin{question+}\label{qn-fast} \textbf{Expansions by fast sequences.} As noted after Fact \ref{DPI}, it might be worth to explore whether quantifier elimination holds in further d-minimal expansions of the real field. We mention here one:
\smallskip

{\em
Assume $\WR$ is an expansion of the real field by a fast sequence \cite{Mil2}. Does it have quantifier elimination (and hence satisfies \textup{(DP)(I)}?}
\end{question+}

\begin{question+}\label{qn-sbd} \textbf{Expansions of semibounded structures.} By Example \ref{exa-Philipp}, a generalization of Theorem \ref{main}, when \cal R is replaced by an o-minimal expansion of it, fails. It is naturally to ask whether we can replace it by a proper reduct of it. Walsberg asked the following question:\smallskip

\emph{Let $\WR=\la  \R, <, \cdot_{\res (0,1)^2}, +, \Z\ra$. Is every definable smooth function $f:\R\to \R$ affine?}\smallskip

A proper reduct of \cal R is also is known in the literature as a \emph{semibounded} structure (see, for example, \cite{ed-sbd}). We recall from \cite{pet-sbd} that a \emph{long} interval $I\sub R$ is an interval on which no real closed field can be defined. We ask the following general question:\smallskip 

{\em
Let $\cal M=\la R, <, +, \dots\ra$ be a semibounded reduct of \cal R, and $\WR$ an expansion of $\cal M$ that satisfies \textup{(DP)} and \textup{(DIM)} with `semialgebraic' replaced by `definable in \cal M'. Let $f:I\sub R\to R$ be a definable smooth map, where $I\sub R$ is a long interval.  Is $f$ affine?}\smallskip

Currently, the assumptions of the last question are not known for the example $\WR=\la  \R, <, \cdot_{\res (0,1)^2}, +, \Z\ra$, although we expect them to be true. Indeed, (DP)(I) would follow from a quantifier elimination result, which perhaps can be proved following the same strategy as in \cite[Appendix]{Mi-IVT} for $\la \Q, <, +, \Z\ra$. Note, however, that Proposition \ref{prop-chunk} still holds with the adapted assumptions and an analogous proof.


\end{question+}

\end{document}